\newtheorem{thm}{Theorem}
\newtheorem{rmk}{Remark}
\newtheorem{exm}{Example}
\newcommand{\rd}{\,\mathrm{d}}
\newcommand{\dx}{\Delta x}
\newcommand{\dt}{\Delta t}
\newcommand{\Dt}{\Delta t}
\newcommand{\aij}{\alpha_{i,j}}
\newcommand{\bij}{\beta_{i,j}}
\newcommand{\sspcoef}{\mathcal{C}}
\newcommand{\ceff}{\sspcoef_{eff}}
\newcommand{\F}{F_{ex}}
\newcommand{\G}{F_{im}}
\newcommand{\Gdot}{\dot{F}_{im}}
\newcommand{\DtFE}{\dt_{{FE}}}
\newcommand{\DtDW}{\dt_{{DW}}}
\newcommand{\DtSD}{\dt_{{SD}}}
\newcommand{\DtND}{\dt_{{ND}}}
\newcommand{\m}[1]{\mathbf{#1}}
\newcommand{\zero}{\m{0}}
\newcommand{\vy}{\m{y}}
\newcommand{\vb}{\m{b}}
\newcommand{\bdot}{\dot{\vb}}
\newcommand{\ve}{\m{e}}
\newcommand{\vc}{\m{c}}
\newcommand{\vcdot}{\dot{\vc}}
\newcommand{\mS}{\m{S}}
\newcommand{\mR}{\m{R}}
\newcommand{\mP}{\m{P}}
\newcommand{\mW}{\m{W}}
\newcommand{\mQ}{\m{Q}}
\newcommand{\mA}{\m{A}}
\newcommand{\mT}{\m{T}}
\newcommand{\mV}{\m{V}}
\newcommand{\mC}{\m{c}}
\newcommand{\mAh}{\m{\hat{A}}}
\newcommand{\mD}{\m{D}}
\newcommand{\mDdot}{\m{\dot{\mD}}}
\newcommand{\mAdot}{\m{\dot{\mA}}}
\newcommand{\Cdot}{\m{\dot{\mC}}}
\newcommand{\Chat}{\m{\hat{\mC}}}
\newcommand{\bhat}{\m{\hat{\vb}}}
\title{A review of high order  strong stability preserving two-derivative explicit,  implicit, and IMEX methods}
\author{%
Sigal Gottlieb\thanks{Mathematics Department, University of Massachusetts Dartmouth, North Dartmouth, MA 02747. Email: sgottlieb@umassd.edu.
SG's research was supported in part by AFOSR Grant No. FA9550-23-1-0037, DOE  Grant No. DE-SC0023164 Subaward RC114586,
and Mass Dartmouth’s Marine and Undersea Technology (MUST) Research Program funded by the ONR Grant No. N00014-20-1-2849.} \and
Zachary J. Grant\thanks{Mathematics Department, University of Massachusetts Dartmouth, North Dartmouth, MA 02747. Email: zgrant@umassd.edu.
ZJG's research was supported in part by  DOE grant  No. DE-SC0023164 Subaward RC114586.}
}
\begin{document}
\maketitle


\bibliographystyle{siam}

\begin{abstract}  
High order strong stability preserving (SSP)  time discretizations  ensure the nonlinear non-inner-product 
strong stability properties of  spatial discretizations suited for the stable simulation of hyperbolic PDEs. 
Over the past decade multiderivative time-stepping  have been used for the time-evolution hyperbolic PDEs, 
so that the  strong stability properties of these methods have become increasingly relevant.
In this work we review sufficient  conditions for a two-derivative multistage method to 
preserve the strong stability properties of spatial discretizations in a forward Euler
and different conditions on the second derivative.
In particular we present the  SSP  theory for explicit and  implicit two-derivative 
Runge--Kutta schemes, and discuss a special condition on the second derivative  under which these
implicit methods may be unconditionally SSP. This condition is then used in the context of 
 implicit-explicit (IMEX) multi-derivative  Runge--Kutta schemes, where the time-step restriction is  independent of the stiff term.
 Finally, we present  the SSP theory for implicit-explicit (IMEX) multi-derivative general linear methods,
 and some novel second and third order methods  where the time-step restriction is  independent of the stiff term. 
  \end{abstract}
  
\section{Overview} 
Strong stability preserving Runge--Kutta  methods were developed by Shu in \cite{shu1988,shu1988b}
to  preserve the nonlinear stability properties of forward Euler in any norm, semi-norm, or convex functional. 
This approach was  further studied for Runge--Kutta, multistep, and general linear methods in
\cite{gottlieb1998, gottlieb2001, ruuth2001, shu2002, spiteri2002, spiteri2003, gottlieb2003, hundsdorfer2003, 
ruuth2004, hundsdorfer2005, ruuth2006, spijker2007, ketcheson2008, ketcheson2007, liu2008, SSPbook2011,
nguyen2014strong,Izzo2015,IZZO2020206,BRAS2021113612}.
The study of the SSP properties of different time-stepping methods has been aided by its connections to 
contractivity theory \cite{ferracina2005a,  ferracina2005,higueras2004a, higueras2005a,ferracina2008,
ketcheson2007, ketcheson2008}.  
SSP methods have proven useful in the solution of hyperbolic PDEs
using many different spatial approaches   \cite{cockburn2004,
peng1999,caiden2001,enright2002,cheng2003,cockburn2005,jin2005,
caiden2001,delzanna2002,baiotti2005, bassano2003,carrillo2003,
tanguay2003,feng2004,labrunie2004,balbas2005,zhang2006,pantano2007,
sun2006,cheruvu2007,wang2005,wang2007a}.  They have been 
widely used for many application areas \cite{wang2005, patel2005,sun2006,
caiden2001,bassano2003,delzanna2002,baiotti2005,zhang2006,feng2004,
balbas2005,mignone2005,labrunie2004,cheruvu2007,pantano2007,cockburn2004,
carrillo2003,tanguay2003,cockburn2005,cheng2003,jin2005}.
 
More recently, two-derivative SSP methods have become a subject of increasing interest
\cite{Nguyen-Ba2010,SDpaper,TSpaper,MORADI2019,EIS2dSSP2020,Jingwei2022,QIN2024106089,QIN2023}.
In this work we review two-step Runge--Kutta methods which preserve the properties of forward Euler and a selection of 
conditions on the second derivative. 
In Section \ref{sec:SSP} we review SSP theory for Runge--Kutta methods. 
In Section \ref{sec:RK-2D}  we discuss the strong stability preservation theory for two derivative methods
and propose three different conditions on the second derivative that each lead to SSP two-derivative
Runge--Kutta methods, and present some optimal methods in each class. Finally, in Section \ref{sec:2D-RK-IMEX}
we discuss implicit-explicit (IMEX) Runge--Kutta methods with two derivatives treated implicitly, and  in 
Section \ref{sec:2D-GLM-IMEX} we extend this approach to IMEX general linear methods.
We note that while this paper is a review of the topic of SSP two derivative methods,
 the material in Section \ref{sec:2D-GLM-IMEX}  and the related order conditions in Appendix 
\ref{sec:OC-IMEXGLM} is new.

\section{SSP methods}   \label{sec:SSP}
In numerically solving the  hyperbolic conservation law
\begin{eqnarray}
\label{pde}
	U_t + f(U)_x = 0,
\end{eqnarray}
oscillations leading to instability may occur when  the exact solution develops sharp gradients or discontinuities. 
High order spatial discretizations  that can handle discontinuities while preserving 
nonlinear non-inner-product stability properties, such as total variation  stability or positivity, are required for the
stable simulation of such problems.  
After discretizing  in space using such a specially designed scheme (e.g. DG, TVD, WENO),
 we obtain the  semi-discretized equation
\begin{eqnarray}
\label{ode}
u_t = F(u),
\end{eqnarray}
(where $u$ is a vector of approximations to $U$)
that has the property that the numerical solution is strongly stable when coupled with forward Euler time stepping
\begin{eqnarray}  \label{FEcond}
\|u^{n+1} \| &=& 	\| u^n + \dt F(u^{n})  \| \leq \| u^n \|, \quad  0  \leq \dt \leq \DtFE,
\end{eqnarray}
where $\| \cdot \|$ is some norm, semi-norm, or convex functional, depending on the design of the spatial discretization.

In practice, Euler's method is not a preferred method, as it is low order and has a linear stability region
that excludes the imaginary axis. Instead, we desire a  higher order method that preserves the  strong stability  property
$	\| u^{n+1} \| \le \|u^n\| $
under a (possibly modified) time-step restriction $\dt \leq \sspcoef \DtFE$.
If such a method exists for $ \sspcoef >0$, we call it a {\em strong stability preserving} (SSP) method, and we 
say that  $ \sspcoef$ is the {\em SSP coefficient} of the method.
The research in the field of SSP methods focuses on finding SSP methods of high order with largest possible $\sspcoef$.

\subsection{Explicit SSP Runge--Kutta methods}
In this section we will show that if a higher order Runge--Kutta method 
can be written as convex combinations of forward Euler steps, 
then any  convex functional  property \eqref{FEcond} satisfied by the forward Euler scheme 
\begin{equation}  \label{FE}
u^{n+1}  =  u^n + \dt F(u^{n}) 
\end{equation}
will be {\em preserved} under a modified  time-step restriction $\dt \leq \sspcoef \DtFE$  \cite{shu1988,shu1988b}. 
In this sense, the forward Euler method is the building block to constructing SSP Runge--Kutta methods.
In fact, the forward Euler condition is a natural and important property of an operator $F$;
it has been noted \cite{ferracina2004}  that it is equivalent to the circle condition which is central to the analysis 
of contractive functions $F$.

The $s$-stage explicit Runge--Kutta method  
\begin{eqnarray}
\label{rkSO}
y^{(0)} & =  & u^n, \nonumber \\
y^{(i)} & = & \sum_{j=0}^{i-1} \left( \aij y^{(j)} +
\dt \bij F(y^{(j)}) \right), \; \; \; \; i=1, . . ., s\\
 u^{n+1} & = & y^{(s)}  \nonumber
\end{eqnarray} 
can be rewritten as  convex combination of forward Euler steps of the form  \eqref{FE} by factoring each stage
\begin{eqnarray*}
y^{(i)} & = & \sum_{j=0}^{i-1} \aij \left(  y^{(j)} +
\dt \frac{\bij}{\aij} F(y^{(j)}) \right).
\end{eqnarray*} 
If all the coefficients $\aij$ and $\bij$ are non-negative, and  $\aij$ is zero only if 
its corresponding $\bij$ is zero, then the  consistency condition  $\sum_{j=0}^{i-1} \aij =1$
and the forward Euler condition \eqref{FEcond} imply that
each stage is bounded by
\begin{eqnarray*}
\| y^{(i)}\| & \leq  &  
   \sum_{j=0}^{i-1} \aij  \, \left\| y^{(j)} + \dt \frac{\bij}{\aij} F(y^{(j}) \right\|  \leq \| y^{(j)} \|
\end{eqnarray*}
for $ \frac{\bij}{\aij}  \dt \leq \DtFE$.
Putting this together for the entire Runge--Kutta method  \eqref{rkSO},
we see that 
\begin{eqnarray}
	\label{rkSSP}
	 \| u^{n+1}\| \leq \| u^{n} \|  \; \; \; \; \; \; \; \mbox{for} \; \; \; \; 
	\dt \leq \sspcoef  \DtFE \; \; \; \;  \mbox{where} \; \; \; \;  \sspcoef = \min_{i,j}  \frac{\aij}{\bij}.
\end{eqnarray}
(If any $\beta$ is equal to zero, we consider that ratio to be infinite.)  

The resulting time-step restriction is a combination of two
distinct factors: (1) the term $\DtFE$ that depends on the spatial discretization, and 
(2) the SSP coefficient $\sspcoef$ that depends only on the time-discretization. As stated above, any method
that admits such a decomposition with $\sspcoef>0$ is called a {\em strong stability preserving (SSP)}
method.

This convex combination  decomposition was used in the development of 
 second and third order explicit SSP Runge--Kutta methods \cite{shu1988} 
 and later of fourth order SSP Runge--Kutta methods methods \cite{spiteri2002, ketcheson2008}.
These methods not only guarantee the  strong stability properties of any  spatial discretization, 
given only the forward Euler condition, but also ensure that 
the intermediate stages in a Runge--Kutta method  satisfy the strong stability property as well.
The convex combination decomposition is not only a  sufficient condition,
it has been shown to be necessary as well
 \cite{ferracina2004, ferracina2005,higueras2004a, higueras2005a}.

Much research on SSP methods focuses on finding high-order time discretizations
with the largest allowable time-step  $\Dt \le \sspcoef \DtFE$ by maximizing
the   {\em SSP coefficient} $\sspcoef$ of the method. 
In fact, a more relevant measure is the
 {\em effective SSP coefficient} $\ceff = \frac{\sspcoef}{s}$ where the cost is 
 relative to the number of function evaluations
at each time-step -- typically the number of stages  $s$ of a method.
All explicit  Runge--Kutta methods with positive SSP coefficient
have a tight bound on the effective SSP coefficient:  $\ceff  \leq 1$ \cite{SSPbook2011}.

Furthermore, it has been shown that explicit  Runge--Kutta methods with positive SSP coefficients
suffer from an order barrier: they cannot be more than
fourth-order accurate \cite{kraaijevanger1991,ruuth2001}.
These bounds and barriers  on explicit  SSP Runge--Kutta methods 
drive the  study of other classes of SSP methods, 
such as explicit or implicit methods with multiple stages, steps, and/or derivatives.
Explicit multistep SSP methods of order  $p>4$ do exist, but have 
severely restricted time-step requirements \cite{SSPbook2011}. Explicit multistep
multistage methods that are SSP and have order $p>4$ have been developed as well \cite{tsrk,msrk}.
This review paper focuses on methods that include a second derivative terms.
The analysis of explicit two-derivative Runge--Kutta methods will be discussed in 
Sections \ref{sec:SD} and \ref{sec:TS}.

 \subsection{Implicit SSP Runge--Kutta methods}
One approach to alleviating  the bounds and barriers of  explicit  methods  is to turn to implicit methods. 
While implicit SSP Runge--Kutta methods exist up to order $p=6$, 
they suffer from a step-size restriction   that is quite severe.
For implicit methods the SSP coefficient is usually bounded by twice the number of stages for a Runge--Kutta 
 method \cite{KetchDW}. This is true for all implicit methods that have been tested: Runge--Kutta, multistep methods,
 and general linear methods. Although this bound on the effective SSP coefficient is twice the maximal size of 
 the bound on the explicit method, the additional computational cost for the implicit solve far outweighs the benefits.
 
Once way of overcoming the bound on the SSP coefficient is 
by using an additional operator $\tilde{F}$ that approximates the same spatial operator as $F$ but
satisfies a downwind first order condition
\begin{eqnarray} \label{DWcond}
\hspace{-1in} \mbox{\bf Downwind condition:}&  \nonumber \\
 &  \| u - \dt \tilde{F}(u)\| \leq \| u \|  \; \; \;   \mbox{ for all } \;\;  \dt \leq  \DtDW, 
 \end{eqnarray}
 instead of the usual forward Euler condition \eqref{FEcond}. Such an operator can often be defined
 when solving hyperbolic PDEs. 
By incorporating the downwind operator $ \tilde{F}$ as well as the usual operator $F$ into an implicit Runge--Kutta method,
Ketcheson and his students designed  families of  second order and third order methods that are 
unconditionally SSP  \cite{KetchDW,YiannisDW}. The second order methods \cite{KetchDW}
have coefficients that depend on $r$
\begin{eqnarray*}
y^{(1)} & = & \frac{2}{r(r-2)} u^n +  \frac{2}{r} \left(  y^{(1)}  + \frac{1}{r} \dt F(y^{(1)})  \right)  
+  \frac{r^2 - 4 r + 2}{r(r-2)}  \left(  y^{(2)}  +  \frac{1}{r}  \dt \tilde{F}(y^{(2)})  \right)  \\
y^{(2)} & = &   y^{(1)}  + \frac{1}{r} \dt F(y^{(1)})     \\
 u^{n+1} & = & y^{(2)}  + \frac{1}{r} \dt F(y^{(2)})     .
\end{eqnarray*}
For $r>2 + \sqrt{2}$, this family of methods is A-stable and SSP with  $\sspcoef = r$.
Since $r$ can be chosen to be arbitrarily large, these methods can be SSP for arbitrarily large $\sspcoef$.
The second and third order methods are {\em fully implicit} and so require the 
simultaneous solution of all the stages.
 
 Inclusion of the downwind term $\tilde{F}$ allows us to bypass the  requirement that all coefficients in the scheme are 
 non-negative. This strict requirement leads to barriers and bounds on the allowable order and SSP coefficient. 
 Allowing negative coefficients provides added flexibility which alleviates the barriers and bounds. Similarly,  
 another way of overcoming the bound on the SSP coefficient involves the inclusion of a second derivative
 and will be discussed in Sections  \ref{sec:ND}, \ref{sec:2D-RK-IMEX}, and  \ref{sec:2D-GLM-IMEX}.

\section{SSP  two-derivative Runge--Kutta methods} \label{sec:RK-2D}
Enhancing  Runge--Kutta methods with additional derivatives was proposed in 
\cite{Tu50,StSt63}, and  multistage multiderivative time integrators for ordinary differential 
equations were studied in  \cite{shintani1971,shintani1972,KaWa72,KaWa72-RK,
mitsui1982,ono2004, tsai2010}. In the last decade multistage multiderivative methods
were applied to the time evolution of  partial differential equations  \cite{sealMSMD2014,tsai2014,LiDu2016a,LiDu2016b,LiDu2018}.
In particular, for hyperbolic PDEs, adding a second derivative to  Runge--Kutta methods is efficient  because the computation of 
the Jacobian of the flux $f(u)$ in \eqref{pde} is generally needed for the stable evolution of the equation; 
the second derivative computation $\dot{F}$ relies on the computation of this 
Jacobian as well. For this reason, we limit our discussion to methods with at most two derivatives: $F$ and $\dot{F}$.
Two derivative Runge--Kutta methods take the form
\begin{eqnarray}
\label{MSMD}
y^{(i)} & = &  u^n +  \dt \sum_{j=1}^{s}  a_{ij} F(y^{(j)}) +
\dt^2   \sum_{j=1}^{s} \dot{a}_{ij} \dot{F}(y^{(j)}) , \; \; \; \; i=1, . . ., s \\
 u^{n+1} & = &  u^n +  \dt \sum_{j=1}^{s} b_{j} F(y^{(j)}) +
\dt^2  \sum_{j=1}^{s} \dot{b}_{j} \dot{F}(y^{(j)})  . \nonumber
\end{eqnarray}
The method can be written in matrix form
\begin{eqnarray}
\label{MSMDmatrix}
\vy & = &  u^n +  \dt \mA F(\vy) +
\dt^2 \mAdot \dot{F}(\vy)  \\
 u^{n+1} & = &  u^n +  \dt \vb^T F(\vy) + \dt^2 \bdot^T \dot{F}(\vy)  \nonumber
\end{eqnarray}
where
$ \mA_{ij}  =  a_{ij} $ , 
$  \mAdot = \dot{a}_{ij}$, 
 and $b$ and $\dot{b}$ are column vectors with the elements $b_{j}$ and $\dot{b}_{j}$, respectively.
We give the order conditions for this method in Appendix \ref{OC-MDRK}.

While the forward Euler condition is central to the development of SSP methods,
additional conditions on the derivative allow us to devise SSP two-derivative Runge--Kutta 
methods. The exact form of the  condition 
depends on the types of the problems we aim to solve: we considered (1) a condition that depends on an evolution
of the second derivative; (2) a condition that mimics the explicit Taylor series method; 
and (3) a condition that is inspired by the implicit Taylor series method.
These three conditions, and the SSP methods that arise from them, are the subject of the next subsections.

\begin{rmk}
We note that for hyperbolic problems, we use a  typical method-of-lines approach where
 the operator  $F$  is a spatial discretization of the term $U_t= -f(U)_x$ 
that leads to the system $u_t = F(u)$. 
In principal,  the computation of the second derivative term $\dot{F}$ should follow directly from the definition of $F$,
so that  $\dot{F} = F(u)_t = F_u u_t = F_u F$. However, in practice, such a computation may be expensive.
Instead, we use a Lax-Wendroff type approach to compute $\dot{F}$. We  go back to the original  PDE
\eqref{pde},  replace the time derivatives by the spatial derivatives, and discretize these in space. 
\end{rmk}

\subsection{Second derivative condition} \label{sec:SD}
In any method-of-lines formulation \eqref{ode}, the  spatial discretization $F$ is designed to satisfy 
the {\em forward Euler} condition \eqref{FEcond}
\begin{eqnarray*} 
	\mbox{\bf \; \; \; Forward Euler condition} \; \; \; \; \; \;  \|u^n +\Delta t F(u^n) \| \leq \| u^n  \| 
	\; \; \;  \mbox{for}  \; \; \;   \Delta t \leq \Delta t_{FE},
\end{eqnarray*}
where $\| \cdot \|$ denotes the desired norm, semi-norm, or convex functional.
To account for the second derivative in \eqref{MSMD}, we  impose a similar strong stability condition on the second derivative
\begin{eqnarray} \label{SDcond}
& & \mbox{\bf \; \; Second derivative condition} \nonumber  \\
& & \hspace{0.5in} \|u^n +\Delta t^2 \dot{F}(u^n) \| \leq \| u^n  \| \; \; \; \mbox{for}  \; \;   \Delta t \leq  \DtSD = K \DtFE.
\end{eqnarray}
We find it useful to define the constant $K =  \DtSD/\DtFE$ 
 that compares the stability condition of the second derivative term
to that of the forward Euler term, so that condition \eqref{SDcond}
holds for $ \Delta t \leq  K  \Delta t_{FE}.$


The choice of second derivative condition \eqref{SDcond} over the more natural Taylor series condition
\eqref{TScond} was motivated by  the unique  
two-stage two-derivative fourth order method  \cite{LiDu2016a, LiDu2016b, LiDu2018}
\begin{align}  \label{2s4p}
    y^{(1)}    &= u^n + \frac{\Delta t}{2} F(u^n) + \frac{\Delta t^2}{8} \dot{F}(u^n) \nonumber \\
    u^{n+1} &= u^n + \Delta t           F(u^n) + \frac{\Delta t^2}{6}( \dot{F}(u^n)+2\dot{F}( y^{(1)} )).
\end{align}
This method  is SSP if $F$ satisfies the forward Euler 
condition \eqref{FE} and  $\dot{F}$ satisfies the second derivative condition
\eqref{SDcond}. This is because we write  each stage of \eqref{2s4p} as a convex combination of the building blocks  in
\eqref{FE} and \eqref{SDcond}. The first stage can be written  for $0 \leq \alpha \leq 1$
\begin{eqnarray*}
  y^{(1)}   & = & \alpha \left( u^n + \frac{\Delta t}{2\alpha} F(u^n)\right)
   + (1-\alpha) \left( u^n +  \frac{\Delta t^2}{8(1-\alpha)} \dot{F}(u^n) \right),
\end{eqnarray*}
where $\| y^{(1)}  \| \leq \| u^n  \|$ for an appropriate time-step 
\[ \dt \leq \min \left\{ 2\alpha \DtFE, \sqrt{8(1-\alpha)} K  \DtFE \right\}\]
dictated by \eqref{FEcond} and \eqref{SDcond}. 
Similarly, for  $0 \leq \alpha \leq 1$, $0 \leq \beta \leq 1$, $0 \leq 1 - \alpha -  \beta \leq 1$, the second stage 
\begin{eqnarray*}
 u^{n+1}     &= & (1 - \alpha - \beta)  \left( u^n   + \dt  \frac{2-\alpha}{2(1-\alpha - \beta)}  F(u^n)  \right) 
        + \beta \left( u^n +  \frac{4 - 3\alpha}{24 \beta} \Delta t^2 \dot{F}(u^n)\right)  \\
     && + \alpha \left( y^{(1)} + \frac{\Delta t^2}{3\alpha}  \dot{F}( y^{(1)} )\right) 
\end{eqnarray*}
 is SSP for an appropriate time-step.

While not every multistage multiderivative method is SSP in the sense that it preserves the 
properties of \eqref{FE} and \eqref{SDcond}, we can use a convex decomposition approach 
to determine which ones are, and to find the value of $\dt$ for which we can ensure the method satisfies the
desired strong stability properties.  This approach is generalized in the following theorem, which also suggests
the optimal decomposition of the method.

\begin{thm} [\cite{SDpaper}]
\label{thmSD}
Given spatial discretizations $F$ and $\dot{F}$ that satisfy \eqref{FE} and \eqref{SDcond},
a two-derivative multistage method  of the form \eqref{MSMDmatrix} preserves  the strong stability property 
$ \| u^{n+1} \| \leq \|u^n \|$  under the time-step restriction $\Delta t \leq r \DtFE$ for some $r>0$,
if  satisfies the  component-wise inequalities
\begin{subequations} 
\begin{align} 
\left( I +r \mS  + \frac{r^2}{K^2} \dot{\mS} \right)^{-1}  \ve  \geq  0   \label{SSPcondition1} \\
r  \left( I +r \mS  + \frac{r^2}{K^2} \dot{\mS} \right)^{-1} \mS \geq  0   \label{SSPcondition2} \\
 \frac{r^2}{K^2}   \left( I +r \mS  + \frac{r^2}{K^2}  \dot{\mS} \right)^{-1} \dot{\mS} \geq 0   \label{SSPcondition3}
 \end{align} 
\end{subequations}
where 
\[ \mS= \left[ \begin{array}{ll} \mA & \zero_{s \times 1} \\ \vb^T & 0 \end{array} \right] \; \; \; \; \;
	\mbox{and} \; \; \; \; \; \dot{\mS}= \left[ \begin{array}{ll} \dot{\mA} & \zero_{s \times 1}  \\ \dot{\vb}^T & 0  \end{array}  \right]
\] and $\ve$ is a vector of ones.
\end{thm}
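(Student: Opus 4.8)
The plan is to mimic the convex-combination argument used for the explicit Runge--Kutta case and for the specific $2s4p$ method \eqref{2s4p}, but to carry it out abstractly for the full method \eqref{MSMDmatrix} written in terms of the augmented Butcher arrays $\mS$ and $\dot{\mS}$. First I would stack the stage vector $\vy$ together with $u^{n+1}$ into a single augmented vector, so that the whole method (stages and update) is expressed as
\[
	\vy = \ve\, u^n + \dt\, \mS F(\vy) + \dt^2 \dot{\mS}\, \dot{F}(\vy),
\]
with the convention that the last row of $\mS$ and $\dot{\mS}$ is $\vb^T$ and $\dot{\vb}^T$. The building blocks are the forward Euler operator $v \mapsto v + \dt F(v)$, which is norm-nonincreasing for $\dt \le \DtFE$, and the second-derivative operator $v \mapsto v + \dt^2 \dot{F}(v)$, which is norm-nonincreasing for $\dt \le K\DtFE$. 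The idea is to rewrite each component of the augmented system as a convex combination of $u^n$ and of these two building blocks applied to earlier (or the same) stages.

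Next I would introduce the "canonical form" parameters: seek a decomposition of the method in which each stage $y^{(i)}$ equals a nonnegative combination of $u^n$, of $\big(y^{(j)} + \frac{\dt}{\lambda} F(y^{(j)})\big)$-type terms, and of $\big(y^{(j)} + \frac{\dt^2}{\mu} \dot{F}(y^{(j)})\big)$-type terms. Writing $\dt = r\DtFE$ and collecting coefficients, this amounts to solving a linear algebra identity. Concretely, if $\mP = r(I + r\mS + \tfrac{r^2}{K^2}\dot{\mS})^{-1}\mS$ is the coefficient matrix multiplying the $F$-building-blocks, $\mathbf{\dot P} = \tfrac{r^2}{K^2}(I + r\mS + \tfrac{r^2}{K^2}\dot{\mS})^{-1}\dot{\mS}$ the one multiplying the $\dot F$-building-blocks, and $\mathbf{q} = (I + r\mS + \tfrac{r^2}{K^2}\dot{\mS})^{-1}\ve$ the coefficient on $u^n$, then one verifies algebraically that
\[
	\vy = \mathbf{q}\, u^n + \mP\Big(\vy + \tfrac{\DtFE}{1} F(\vy)\Big)\text{-rows} + \mathbf{\dot P}\Big(\vy + K^2\DtFE^2\, \dot{F}(\vy)\Big)\text{-rows},
\]
where the precise scaling inside each building block is chosen so that forward Euler is evaluated at step $\DtFE$ and the second-derivative condition at step $K\DtFE$ (this is exactly why the factors $K^{-2}$ appear as they do in \eqref{SSPcondition1}--\eqref{SSPcondition3}). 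The algebraic verification is just rearranging $(I + r\mS + \tfrac{r^2}{K^2}\dot{\mS})\vy = \ve u^n + r\mS\vy + \tfrac{r^2}{K^2}\dot{\mS}\vy$ and substituting $r\mS F(\vy)\DtFE$-type and $\dot{\mS}\dot F(\vy)$-type terms; no real computation, just bookkeeping.

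Then, assuming the three hypotheses \eqref{SSPcondition1}--\eqref{SSPcondition3}, the coefficients $\mathbf{q}$, $\mP$, $\mathbf{\dot P}$ are all entrywise nonnegative. I would check that they also sum to one in each row: summing the identity above against $\ve$ and using the consistency conditions $\mS\ve$-structure (each row sum of the "building block" combination is a genuine convex combination because $\mathbf{q} + \mP\ve + \mathbf{\dot P}\ve = \ve$, which follows from $(I + r\mS + \tfrac{r^2}{K^2}\dot S)^{-1}$ applied to $\ve + r\mS\ve + \tfrac{r^2}{K^2}\dot S\ve = (I + r\mS + \tfrac{r^2}{K^2}\dot S)\ve$). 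Finally I would induct on the stage index $i$: each $y^{(i)}$ is a convex combination of $u^n$ (norm $\le \|u^n\|$ trivially), of forward-Euler images of earlier stages (norm $\le \|y^{(j)}\| \le \|u^n\|$ by the inductive hypothesis and \eqref{FEcond}, valid since the effective step is $\DtFE \le \DtFE$), and of second-derivative images of earlier stages (norm $\le \|y^{(j)}\| \le \|u^n\|$ by \eqref{SDcond}, valid since the effective step $K\DtFE \le \DtSD$); by convexity $\|y^{(i)}\| \le \|u^n\|$. Applying this to the last component gives $\|u^{n+1}\| \le \|u^n\|$.

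The main obstacle I anticipate is the algebraic identity in the second step: one must be careful that the diagonal and subdiagonal structure of $\mS$, $\dot{\mS}$ (including the possibility of implicitness, i.e. nonzero diagonal or upper-triangular entries) is handled correctly so that the matrix $I + r\mS + \tfrac{r^2}{K^2}\dot{\mS}$ is actually invertible and the rearrangement is legitimate, and that the "effective time step" inside each forward-Euler building block is genuinely $\le \DtFE$ and inside each second-derivative building block genuinely $\le K\DtFE = \DtSD$ — this is what pins down the exponents of $r$ and $K$ in the three conditions and is the only place where the specific powers matter. Everything else (nonnegativity $\Rightarrow$ convex combination, convex combination $\Rightarrow$ strong stability, induction on stages) is routine and parallels the explicit SSP Runge--Kutta argument recalled earlier in the excerpt.
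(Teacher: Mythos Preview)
Your proposal is correct and follows essentially the same approach that the paper uses: the paper does not give a self-contained proof of Theorem~\ref{thmSD} (it is cited from \cite{SDpaper}), but immediately before the theorem it illustrates exactly this convex-combination decomposition on the $2s4p$ method \eqref{2s4p} and then states that ``this approach is generalized in the following theorem.'' Your identification of $\mP = r(I + r\mS + \tfrac{r^2}{K^2}\dot{\mS})^{-1}\mS$, $\dot{\mP} = \tfrac{r^2}{K^2}(I + r\mS + \tfrac{r^2}{K^2}\dot{\mS})^{-1}\dot{\mS}$, and $\mathbf{q} = (I + r\mS + \tfrac{r^2}{K^2}\dot{\mS})^{-1}\ve$, the verification that $\mathbf{q} + \mP\ve + \dot{\mP}\ve = \ve$, and the stage-by-stage convexity bound are precisely the intended argument; the scaling you chose for the two building blocks (forward Euler at step $\dt/r$ and second derivative at step $K\dt/r$) is what makes the powers of $r$ and $K$ come out as in \eqref{SSPcondition1}--\eqref{SSPcondition3}.
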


\begin{exm} {\bf Motivating Example:}
An example in which conditions \eqref{FE} and \eqref{SDcond} are satisfied and the SSP property is desired 
was considered in \cite{SDpaper}.
Consider  the simple linear one-way wave equation 
\[ U_t = U_x.\]
We can semi-discretize in space to obtain $u_t = F(u)$ where $F$ is defined by a first-order upwind method
\[ F(u^n)_j := \frac{1}{\dx} \left(u^n_{j+1} - u^n_j \right) \approx U_x( x_j ).\] 

If $u$ is sufficiently smooth, the second derivative in time is also the second derivative in space:
\[ U_{tt} = (U_x)_t = ( U_{t} )_x = U_{xx}.\]
This convenient fact allows us to use a  Lax-Wendroff type approach to define $\dot{F}(u^n)$ by a centered spatial discretization of $U_{xx}$,
e.g.
\[ \dot{F}(u^n)_j := \frac{1}{\dx^2} \left( u^n_{j+1}- 2 u^n_j + u^n_{j-1} \right) \approx  U_{xx}( x_j ).\]

Both $F$ and $\dot{F}$ satisfy the  total variation diminishing (TVD) property:
\[ \left\| u^n + \Delta t F(u^n) \right\|_{TV} \leq   \left\| u^n\right\|_{TV}  \; \;  \; \;  \mbox{for} \; \; \; \; \Delta t \leq \Delta x,  \]
and
\[ \left\| u^n +  \Delta t^2 \dot{F}(u^n) \right\|_{TV} \leq   \left\| u^n\right\|_{TV}  \; \;  \; \;  \mbox{for} \; \; \; \; \Delta t \leq \frac{\sqrt{2}}{2} \Delta x.  \]
In Section \ref{sec:numerical} we show how the methods that satisfy the conditions of Theorem \eqref{thmSD} perform for these
$F$ and $\dot{F}$.
\end{exm}

\subsubsection{Optimal methods based on the forward Euler and second derivative conditions}
\label{sec:FE-SDmethods}

In this section we present some optimal methods (in the sense of the largest $\sspcoef$) that satisfy 
Theorem \eqref{thmSD} for  
 spatial discretizations $F$ and $\dot{F}$ that satisfy \eqref{FEcond} and \eqref{SDcond}.

\noindent{\bf Optimal one stage second order methods:}
There is a unique explicit one-stage  two-derivative second order method: the Taylor series method
\begin{eqnarray} \label{TSmethod}
 u^{n+1} &=& u^n + \dt F(u^n) + \frac{1}{2} \dt^2 \dot{F}(u^n) .
 \end{eqnarray}
The optimal decomposition of this method, and the corresponding  SSP coefficient, depend on the  value of $K$ in \eqref{SDcond}.
This method can be written as a convex combination of two terms that satisfy the conditions \eqref{FE} and \eqref{SDcond}:
\[ u^{n+1} = (1-\alpha) \left( u^n + \frac{1}{1-\alpha} \dt F(u^n)  \right) + 
\alpha \left( u^n + \frac{1}{2 \alpha} \dt^2 \dot{F}(u^n) \right) .\]
This is SSP for $\dt \leq \max \{ (1-\alpha) \DtFE, \sqrt{2 \alpha} K  \DtFE \}$, so we set
\[ (1-\alpha)^2 =  2 \alpha K^2 \; \; \; \implies \; \; \; \;  \alpha = 1 + K^2 \pm K \sqrt{ 2 + K^2} \]
to obtain
\[ \sspcoef =  K \sqrt{2 + K^2} - K^2.\] 

%

\noindent{\bf Two-stage  third order methods.} 
Optimal SSP  explicit two-stage two-derivative third order methods take the form
\begin{eqnarray} \label{2s3p}
u^*     &=& u^n+a \Delta t F(u^n)+ \hat{a} \Delta t^2 \dot{F}(u^n), \nonumber \\
u^{n+1} &=& u^n +b_1  \Delta t  F(u^n) +  b_2 \Delta t  F(u^*) +\hat{b_1}
    \Delta t^2  \dot{F}(u^n) + \hat{b_2} \Delta t^2  \dot{F}(u^*).
\end{eqnarray}
The optimal method has  SSP coefficient $\sspcoef = r$ given by  
the smallest positive root of the polynomial
\[  2 K (a_0 - 2K) + 4 K^3 a_0  - a_0 r 
+ \frac{1-a_0}{2 K^2} r^2 
-\frac{\frac{a_0}{2K} + K}{6 K^3} r^3 , \]
for $a_0 =\sqrt{K^2+2} - K$.
The coefficients of the optimal method for each $K$ depend on $K$ and $r$ and are given by

\begin{eqnarray*}
&a =  \frac{1}{r} \left(K a_0 \right), \; \; \; \; \; \; \; \; \; \;  b_1  =  1- b_2,  \; \; \; 
&b_2  =  \frac{2 K^2 (1 - \frac{1}{r} ) + r } {  K a_0 + 2 K^2} - \frac{r^2}{3 K^2} ,\\
&\hat{a}  =  \frac{1}{2} a^2, \; \; \; \; 
\hat{b}_1  =  \frac{1}{2} - \frac{1}{2} a b_2 - \frac{1}{6 a} ,\; \; \; \; \;
&\hat{b}_2  =  \frac{1}{6 a} - \frac{1}{2} a b_2 . 
\end{eqnarray*}
%

\noindent{\bf Fourth order methods.} 
The  two-stage two-derivative fourth order method is given in \eqref{2s4p}.
Although the method is unique, the optimal decomposition, and therefore the  SSP coefficient, 
depends on $K$. The SSP coefficient $\sspcoef=r$ is given by the smallest positive  root of the polynomial:
\[  r^4 + 4 K^2 r^3 -12 K^2 r^2 - 24 K^4 r + 24 K^4.\] 
Although the method can be implemented in its usual form, for analysis purposes the 
optimal Shu-Osher decomposition may be helpful:
\begin{eqnarray*}
y^{(1)}  & = & \left( 1- \frac{4 r K^2 + r^2}{8 K^2} \right) u^n + \frac{r}{2} \left( u^n + \frac{\dt}{r} F(u^n) \right)
+ \frac{ r^2}{8 K^2}  \left( u^n + \frac{K^2}{r^2} \dt^2 \dot{F}(u^n) \right) \\
u^{n+1} &=& r \left( 1 - \frac{r^2}{6K^2} \right) \left( u^n + \frac{\dt}{r} F(u^n) \right) + 
	\frac{r^2 ( 4 K^2-r^2)}{24 K^4}  \left( u^n + \frac{K^2}{r^2} \dt^2 \dot{F}(u^n) \right) \\
	&& \;  + \; \; \; 
	\frac{r^2}{3 K^2}  \left( y^{(1)} + \frac{K^2}{r^2} \dt^2 \dot{F}(y^{(1)}) \right).  \nonumber
\end{eqnarray*}
Increasing the number of stages to three allows for  fourth-order SSP methods with 
a larger SSP coefficient, as  described in \cite{SDpaper}.  Increasing the number
of stages also allows for fifth order.

\noindent{\bf Three-stage  fifth order methods.} 
While explicit Runge--Kutta methods have an  order barrier of $p=4$  \cite{kraaijevanger1991,ruuth2001}, 
 including a second derivative term allows us to achieve fifth order SSP methods.
The optimal SSP three stage two-derivative fifth order methods were given in \cite{SDpaper}:
\begin{eqnarray} \label{3s5p}
    y^{(1)}     &= & u^n +  a_{21} \Delta t F(u^n) +  \dot{a}_{21}\Delta t^2  \dot{F}(u^n) \nonumber \\
     y^{(2)} &=& u^n +  a_{31} \Delta t F(u^n)  
    +  \dot{a}_{31}\Delta t^2  \dot{F}(u^n)  +  \dot{a}_{32}\Delta t^2  \dot{F}(y^{(1)} )  \\
    u^{n+1} &=& u^n +  \Delta t  F(u^n)  
    + \Delta t^2 \left( \dot{b}_1 \dot{F}(u^n)+ \dot{b}_2 \dot{F}(y^{(1)} ) + \dot{b}_3 \dot{F}(y^{(2)} ) \right). \nonumber
\end{eqnarray}
The SSP coefficient $\sspcoef = r$ is the  largest positive root of 
\[  10 r^2 a_{21}^4  - (100 K^2 + 10 r^2) a_{21}^3   +( 130 K^2  + 3 r^2 ) a_{21}^2   
- 50 K^2 a_{21}+ 6 K^2, \]
where 
\[  a_{21} =\frac{K^6}{r^6} \left( - \frac{2}{K^4} r^5 + \frac{10}{K^4} r^4  +  \frac{40}{K^2} r^3
   - \frac{120}{K^2} r^2  -240 r + 240 \right).
\]
 Given $K$, we can find the corresponding $r$, and the coefficients are then given as a one-parameter system 
\begin{eqnarray*}
\dot{a}_{21} &=& \frac{1}{2}  a_{21}^2 , \; \; \; \; \;   a_{31}    =   \frac{3/5 -a_{21}}{1-2 a_{21}}, \\ 
\dot{a}_{32}   & = & \frac{1}{10} \left(  \frac{(\frac{3}{5} -a_{21})^2}{a_{21} (1-2 a_{21})^3} - 
\frac{\frac{3}{5} -a_{21}}{(1-2 a_{21})^2}  \right), \; \; \; \; \; 
\dot{a}_{31}   =   \frac{1}{2} \frac{(\frac{3}{5} -a_{21})^2}{(1-2 a_{21})^2}  -\dot{a}_{32}, \\
\dot{b}_2  & = & \frac{2 a_{31}-1}{12 a_{21}(a_{31}-a_{21})} , \; \; \; \; \; 
\dot{b}_3   =  \frac{1-2 a_{21}}{12 a_{31} (a_{31}-a_{21})} , \; \; \; \; \; 
\dot{b}_1   =  \frac{1}{2} - \dot{b}_2 - \dot{b}_3. \\
\end{eqnarray*}
This method shows that  explicit multiderivative Runge--Kutta methods  break the well-known fourth order barrier for 
explicit SSP Runge--Kutta methods.

\subsubsection{Numerical example}  \label{sec:numerical}
To demonstrate how these numerical methods perform on our motivating example, 
we repeat the example from \cite{SDpaper}.
We simulate  the linear advection equation
\[ U_t = U_x, \; \; \;  \mbox{on the domain} \; \; \; x \in [0,1]  \] 
using a   first order finite difference for the first derivative and 
a second order centered difference for the second derivative 
\[
	F(u^n)_j := \frac{u^n_{j+1}-u^n_j}{\Delta x} \approx U_x( x_j ), \; \; \; \; \; \;
		\mbox{and} \; \; \; \; \; \;
	\dot{F}(u^n)_j := \frac{u^n_{j+1}- 2 u^n_j + u^n_{j-1}}{\Delta x^2} \approx  U_{xx}( x_j ).
\]
These spatial discretizations satisfy \eqref{FEcond} with $\DtFE = \Delta x$, and
\eqref{SDcond} with $K  = \frac{1}{\sqrt{2}} $.
We use a step function  initial condition 
\begin{eqnarray*}
	u_0(x) = \left\{ \begin{array}{ll}
		1 & \text{if}\ \frac{1}{4} \leq x \leq \frac{1}{2}, \\
		0 & \text{otherwise},
\end{array} \right. 
\end{eqnarray*}
and periodic boundary conditions $u(0,t) = u(1,t)$. 

\begin{figure}[t!] \hspace{0.25in} 
\includegraphics[width=0.45\textwidth]{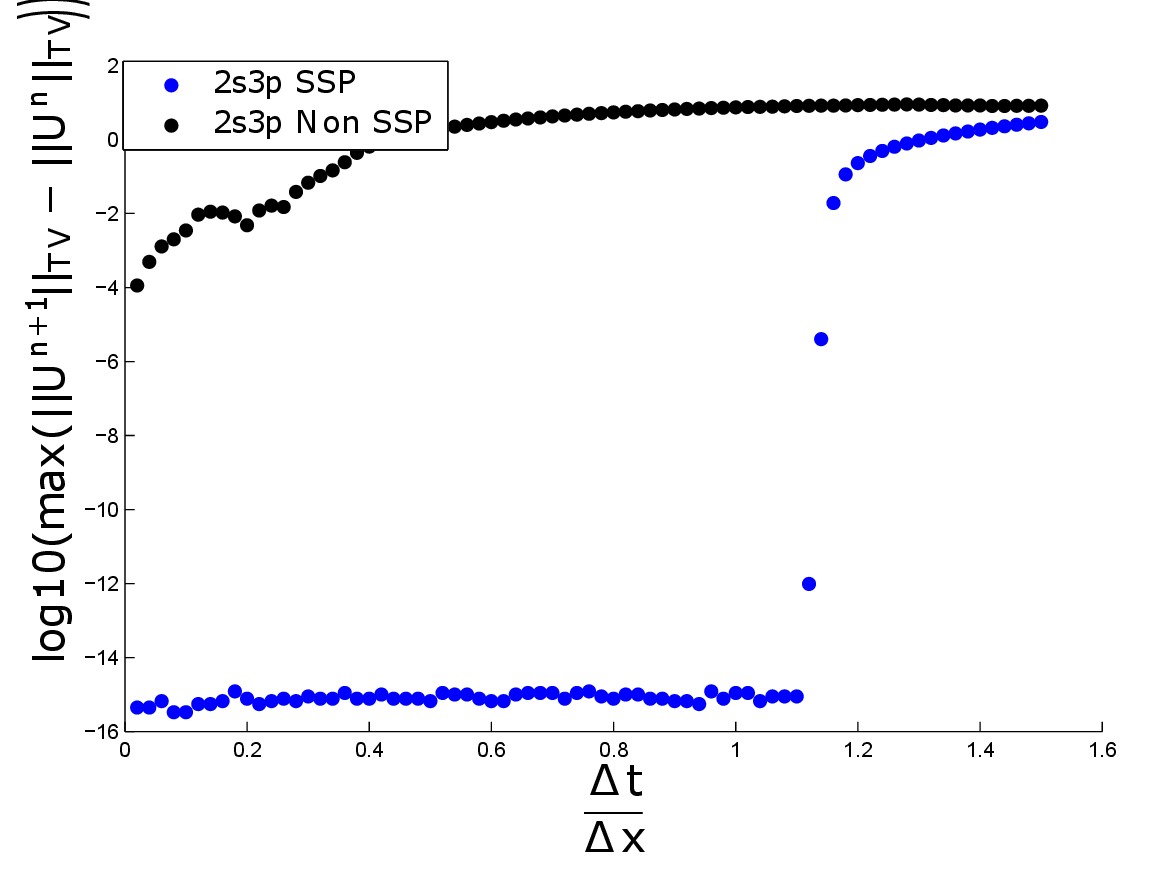}
\includegraphics[width=0.45\textwidth]{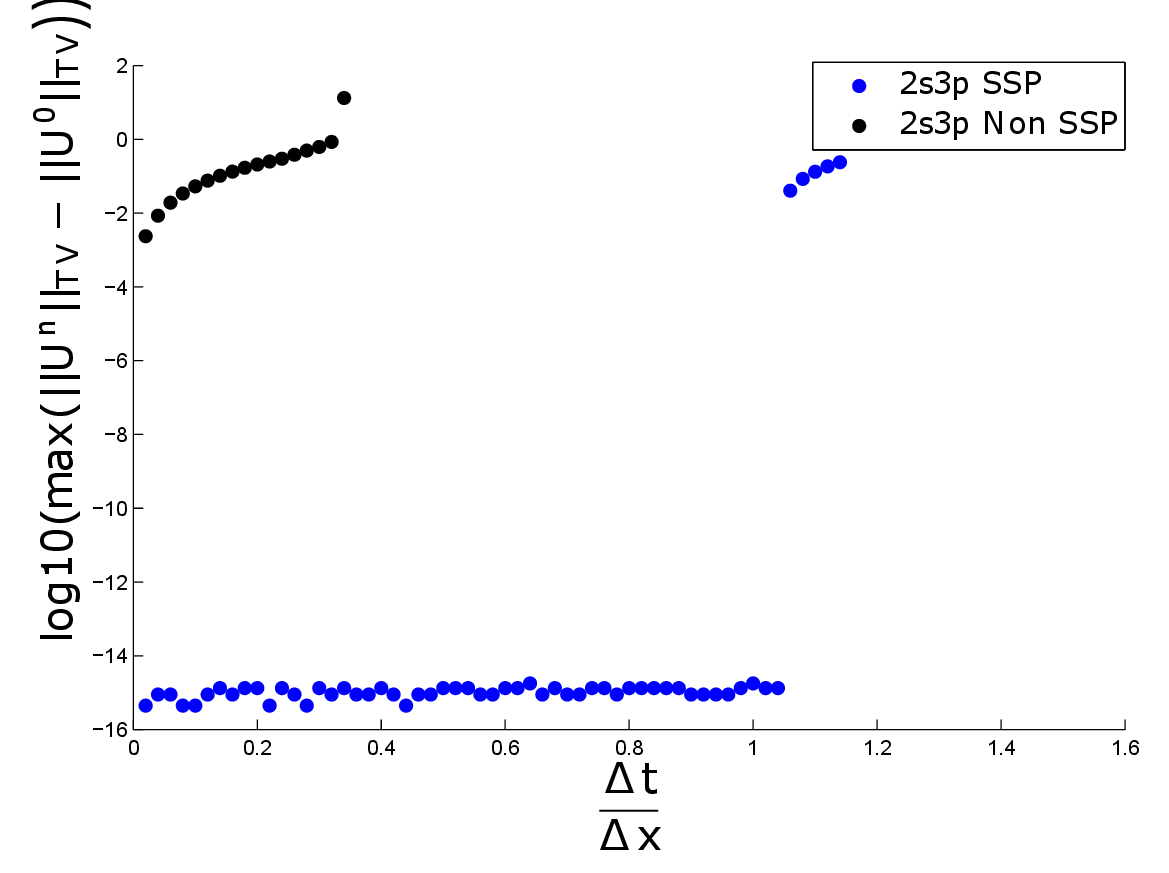}
\caption{ Comparison of the  rise in total variation as a function of the CFL number 
for the two-stage third order SSP  \eqref{2s3p} and non-SSP method \eqref{bad2s3p}.
On the left is the maximal per time-step rise  and on the right the maximal TV rise above the initial TV.
 }
 \label{LinearTest}
\end{figure}

We use  a spatial step $\Delta x = \frac{1}{1600}$, and a time-step $\Delta t = \lambda \dx$
 where we vary $0.05 \leq \lambda \leq 2$ to find the value $\lambda_{obs}$ at which the total variation starts to rise. 
 We measure the maximal rise in total variation 
 \[ \max_{0 \leq n \leq N-1} \left( \| u^{n+1} \|_{TV} - \| u^n \|_{TV} \right), \]
and the  rise in total variation compared to the total variation of the initial solution:
\[ \max_{0 \leq n \leq N-1} \left( \| u^{n+1} \|_{TV} - \| u^0 \|_{TV} \right),\]
 over time-evolution  of $N=50$ time-steps 
 using the methods in Section \ref{sec:FE-SDmethods}.
For comparison we consider the non-SSP two stage third order method, 
 \begin{eqnarray} \label{bad2s3p}
u^*     &=& u^n - \Delta t F(u^n)+ \frac{1}{2} \Delta t^2 \dot{F}(u^n), \nonumber \\
u^{n+1} &=& u^n  - \frac{1}{3}  \Delta t  F(u^n) +  \frac{4}{3} \Delta t  F(u^*) +
\frac{4}{3}    \Delta t^2  \dot{F}(u^n) +\frac{1}{2} \Delta t^2  \dot{F}(u^*).
\end{eqnarray}
In Figure \ref{LinearTest} we see that  that the SSP method \eqref{2s3p}  preserves the TVD behavior of the discretization up to the 
value $\lambda_{obs} = 1.04$, while the non-SSP method \eqref{bad2s3p} does not preserve the TVD behavior at all.
This graph shows that the  absence of the SSP property results in the loss of the TVD property for any time-step.

Figure \ref{LinearTest2} shows the maximal rise in total variation for each CFL value $\lambda = \frac{\dt}{\dx}$
for the Taylor series method, and the \eqref{2s3p}, \eqref{2s4p}, \eqref{3s5p} methods described in Section \ref{sec:FE-SDmethods}, 
as well as a three stage fourth order optimal for $K = 1/\sqrt{2}$ presented in \cite{SDpaper}. 
It is interesting to compare the observed value  $\lambda_{obs}$ at which
the TV starts to rise to the theoretical value $\sspcoef$. 
For the Taylor series method,  the two stage third order method, and the three stage fourth order method, 
the observed SSP coefficient matches exactly the theoretical value. On the other hand,
the two-stage fourth order and the three-stage fifth order, both of which have the smallest SSP coefficients (both in theory 
and practice), have a larger observed SSP coefficient than predicted, as 
presented in the following table. \\
\begin{center}
\begin{tabular}{|c|c|c|c|c|c|}  \hline 
Method & 1s2p (TS) & 2s3p & 2s4p & 3s4p & 3s5p \\ \hline
$\sspcoef$ 	  & 0.6180 & 1.0400 & 0.6788 & 1.3927  & 0.6746 \\
$\lambda_{obs}$ & 0.6180 & 1.0400 & 0.7320 & 1.3927 & 0.7136  \\ \hline
\end{tabular} 
\end{center}

\begin{figure}[t] \hspace{0.25in} 
\includegraphics[width=0.45\textwidth]{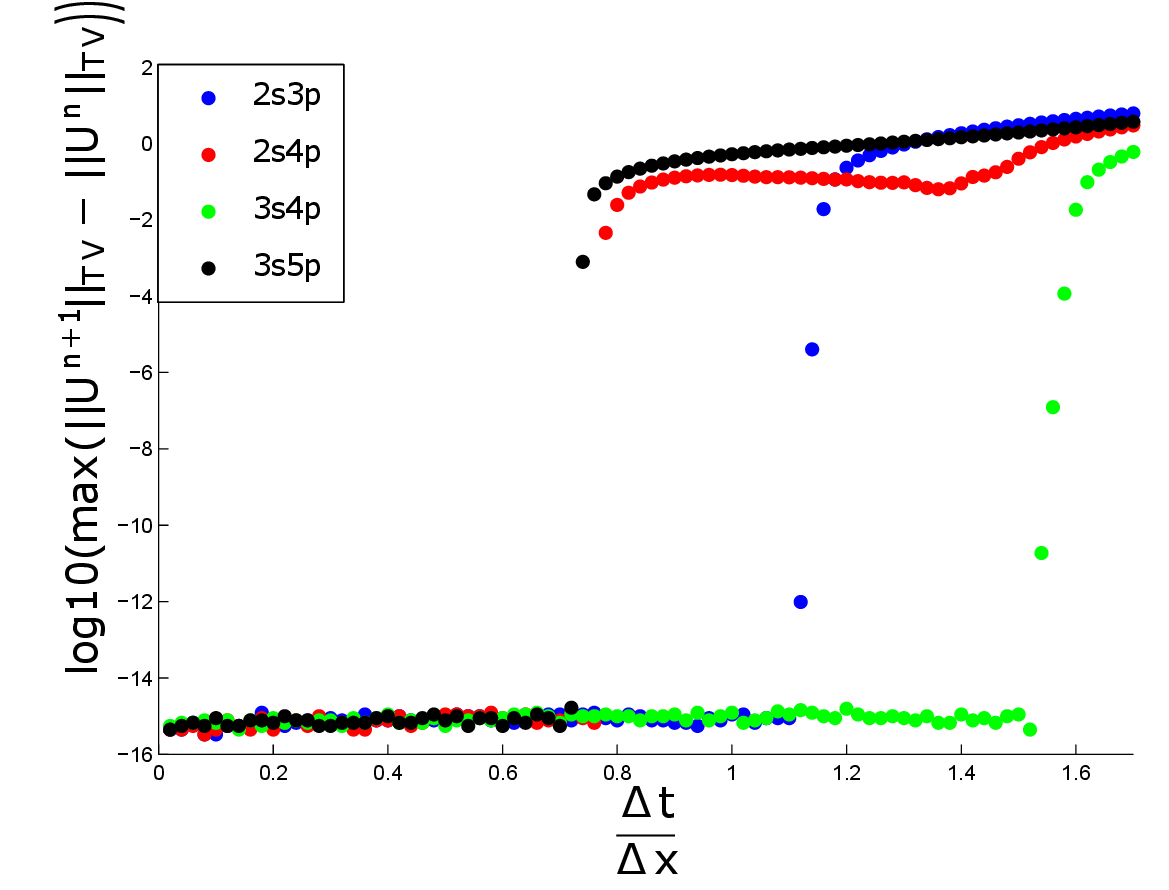}
\includegraphics[width=0.45\textwidth]{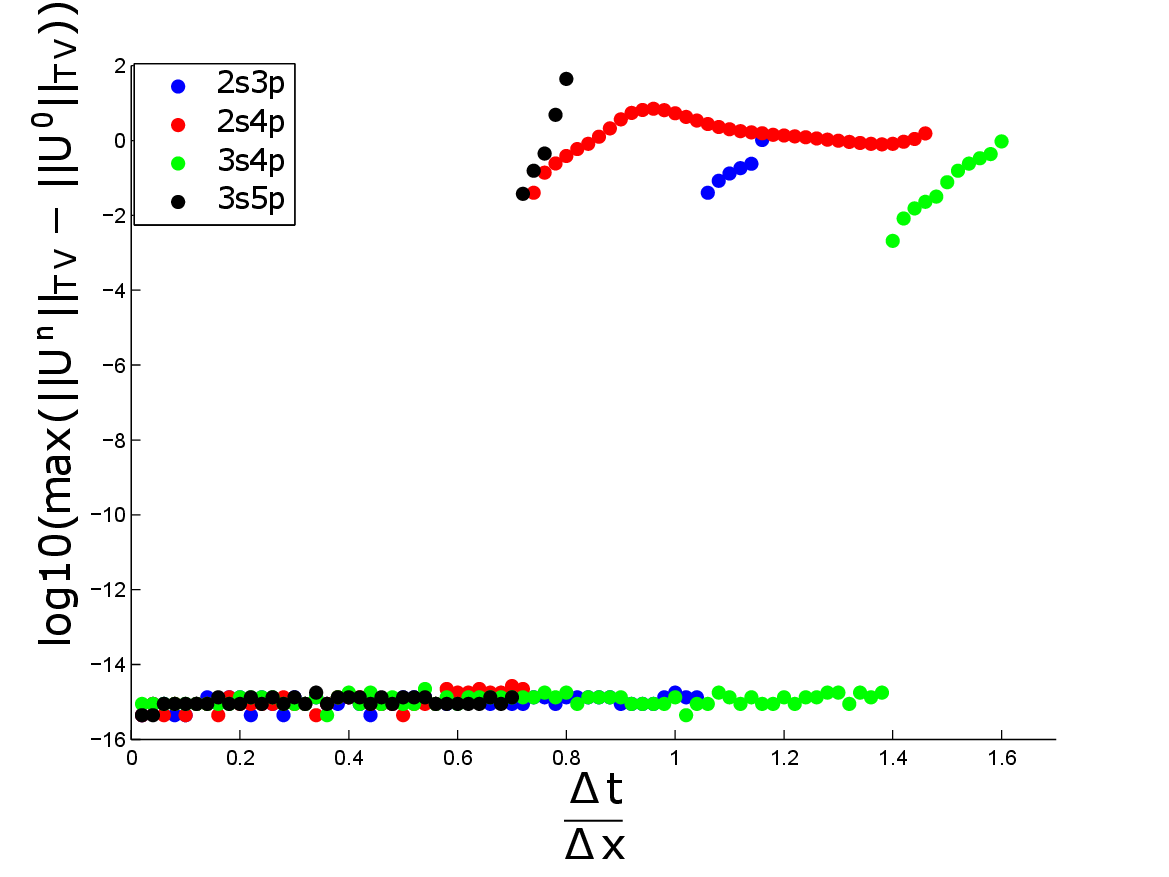} \\
\caption{Comparison of the  rise in total variation as a function of the CFL number 
for the two-stage third order SSP  \eqref{2s3p} and non-SSP method \eqref{bad2s3p}.
On the left is the maximal per time-step rise  and on the right the maximal TV rise above the initial TV.}
 \label{LinearTest2}
\end{figure}

\bigskip

\subsection{Taylor Series Condition}  \label{sec:TS}
%

In addition to being the unique one stage second order method, 
the Taylor series method \eqref{TSmethod} is a natural building block for multistage two-derivative methods.
Indeed, many spatial discretizations in the literature \cite{DumbserBalsara1,QiuDumbserShu,DaruTenaud,sealMSMD2014}
were designed to satisfy
\begin{eqnarray}  \label{TScond}
&& \mbox{\bf Taylor series condition:} \nonumber \\ 
&& \|u^n +  \Delta t F(u^n) +  \frac{1}{2} \Delta t^2 \dot{F}(u^n) \| \leq \| u^n  \|  \;\;   \mbox{for}  \;\;    \Delta t \leq \kappa \Delta t_{FE}.
\end{eqnarray}
Additionally,  \eqref{TScond} provides more flexibility for the spatial discretization: 
there exist spatial discretizations that satisfy both the  forward Euler condition \eqref{FE} 
and the Taylor series condition  \eqref{TScond}  but not  the  second derivative condition \eqref{SDcond}.
For such discretizations, the methods presented in Section \ref{sec:SD} 
are not guaranteed to preserve the desired  strong stability properties.

\begin{exm} {\bf Motivating Example:}
A simple case in which this added flexibility in the spatial discretization 
is needed is, once again, seen in the   one-way wave equation 
\[ U_t = U_x  \]
 where $F$ is defined  as before by the first-order upwind method 
\[ F(u^n)_j := \frac{1}{\dx} \left(u^n_{j+1} - u^n_j \right) \approx U_x( x_j )\]
and  $\dot{F}$ is computed by simply applying this  differentiation operator twice  
\[   \dot{F}  := \frac{1}{\dx^2} \left( u^n_{j+2}- 2 u^n_{j+1} + u^n_{j} \right).\] 
As noted above, the spatial discretization $F$ satisfies the  total variation diminishing (TVD) property:
\[ \left\| u^n + \Delta t F(u^n) \right\|_{TV} \leq   \left\| u^n\right\|_{TV}  \; \;  \; \;  \mbox{for} \; \; \; \; \Delta t \leq \Delta x,  \]
while the Taylor series term using $F$ and $\dot{F}$ satisfies 
\[ \| u^n + \Delta t F(u^n) + \frac{1}{2}  \Delta t^2 \dot{F}(u^n)  \|_{TV} \leq \|u^n \|_{TV} \; \;  \; \;  \mbox{for} \; \; \; \; \Delta t \leq \Delta x. \]
In other words, these spatial discretizations satisfy  \eqref{FEcond} and \eqref{TScond} with $\kappa=1$, in the TV  semi-norm. 
Note, however, that Condition \eqref{SDcond} is not satisfied by  this $\dot{F}$, so that   the methods derived in \cite{SDpaper} 
will  not guarantee the preservation of the TVD property of the numerical solution using $\F$ and $\dot{F}$.
\end{exm}

Once again, we wish to determine which methods can be written as convex combination of 
the base conditions, and to  find the value of $\dt$ for which we can ensure the method satisfies the
desired strong stability properties.  To decompose the schemes \eqref{MSMDmatrix}
into forward Euler and Taylor series building blocks, we stack the stages into a vector $Y$ and 
\begin{eqnarray*}
Y & = &  u^n +  \dt \mS F(Y) + \dt^2 \dot{\mS} \dot{F}(Y)  \\
& = &  \mR \ve u^n + \mP \left(  Y +  \frac{\dt}{r} F(Y) \right) +
\mQ \left(  Y +  \frac{\kappa \dt }{r} F(Y)
+ \frac{\kappa^2  \dt^2  }{2 r^2}\dot{F}(Y)  \right) .
\end{eqnarray*}
Where $R = I - \mP - \mQ$. We can easily see that if all the elements of $\mP$, $\mQ$, and $\mR \ve$ are non-negative, 
this methods will be a convex combination of the two base conditions. The following theorem gives the conditions
under which these coefficients are non-negative.

\begin{thm} \label{thmTS} 
Given spatial discretizations $F$ and $\dot{F}$ that satisfy \eqref{FE} and \eqref{TScond},
a two-derivative multistage method  of the form \eqref{MSMDmatrix} preserves  the strong stability property 
$ \| u^{n+1} \| \leq \|u^n \|$  under the time-step restriction $\Delta t \leq r \DtFE$ if it satisfies the 
component-wise
conditions
\begin{subequations} 
\begin{align} 
 \left( I +r S  +  \frac{2 r^2}{\kappa^2} \left( 1 - \kappa \right)  \dot{S} \right)^{-1} \ve  \geq  0   \label{SSPconditionTS1} \\
r  \left( I +r S  +  \frac{2 r^2}{\kappa^2} \left( 1 - \kappa \right)  \dot{S} \right)^{-1}      \left(S-  \frac{2 r}{\kappa} \dot{S}  \right)
 \geq  0   \label{SSPconditionTS2} \\
\frac{2 r^2}{\kappa^2}  \left( I +r S  +  \frac{2 r^2}{\kappa^2} \left( 1 - \kappa \right)  \dot{S} \right)^{-1}   \dot{S} 
\geq 0   \label{SSPconditionTS3}
 \end{align} 
\end{subequations}
for some $r>0$. 
\end{thm}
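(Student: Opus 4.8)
The plan is to reproduce, for the Taylor series building block, the convex-combination decomposition used to prove Theorem~\ref{thmSD}. First I would stack the $s$ stages of \eqref{MSMDmatrix} together with the update $u^{n+1}$ into a single vector $Y$, so that the method becomes $Y = \ve u^n + \dt\,\mS F(Y) + \dt^2\,\dot{\mS}\dot F(Y)$ with $\mS$, $\dot{\mS}$ the augmented matrices (as in the paragraph preceding the theorem), and $u^{n+1}$ the last entry of $Y$. The two admissible ``steps,'' scaled to the target restriction $\dt\le r\DtFE$, are the forward Euler step $E_1(w)=w+\tfrac{\dt}{r}F(w)$, which satisfies $\|E_1(w)\|\le\|w\|$ by \eqref{FE} because $\tfrac{\dt}{r}\le\DtFE$, and the Taylor series step $E_2(w)=w+\tfrac{\kappa\dt}{r}F(w)+\tfrac{\kappa^2\dt^2}{2r^2}\dot F(w)$, which satisfies $\|E_2(w)\|\le\|w\|$ by \eqref{TScond} applied with stepsize $\tfrac{\kappa\dt}{r}\le\kappa\DtFE$ (note $\tfrac12(\tfrac{\kappa\dt}{r})^2=\tfrac{\kappa^2\dt^2}{2r^2}$).

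Next I would look for matrices $\mP,\mQ$ realizing the decomposition $Y = \mR\ve u^n + \mP\,E_1(Y) + \mQ\,E_2(Y)$ with $\mR = I-\mP-\mQ$. Expanding the right-hand side, the coefficient of $Y$ is $\mP+\mQ=I-\mR$, so the remainder $\mR Y$ equals $\mR\ve u^n + \tfrac{\dt}{r}(\mP+\kappa\mQ)F(Y) + \tfrac{\kappa^2\dt^2}{2r^2}\mQ\dot F(Y)$; substituting the defining relation for $Y$ on the left and matching the (formally independent) coefficients of $F(Y)$ and $\dot F(Y)$ yields the two matrix identities $r\,\mR\mS=\mP+\kappa\mQ$ and $\mR\dot{\mS}=\tfrac{\kappa^2}{2r^2}\mQ$. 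Solving, $\mQ=\tfrac{2r^2}{\kappa^2}\mR\dot{\mS}$, hence $\mP=r\mR\mS-\kappa\mQ=r\mR\big(\mS-\tfrac{2r}{\kappa}\dot{\mS}\big)$, and substituting both into $\mR=I-\mP-\mQ$ collapses, after the cancellation $\tfrac{2r^2}{\kappa}-\tfrac{2r^2}{\kappa^2}=\tfrac{2r^2}{\kappa^2}(\kappa-1)$ and a sign flip on passing to the inverse, to $\mR\big(I+r\mS+\tfrac{2r^2}{\kappa^2}(1-\kappa)\dot{\mS}\big)=I$. Assuming this matrix is invertible --- which is implicit in the statement, since the conditions are phrased through its inverse, and which holds automatically for explicit methods, where $\mS$ and $\dot{\mS}$ are strictly lower triangular --- we recover precisely the closed forms in \eqref{SSPconditionTS1}--\eqref{SSPconditionTS3}.

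Finally I would assemble the SSP conclusion. Since $\mR+\mP+\mQ=I$ by construction, the row sums satisfy $\mR\ve+\mP\ve+\mQ\ve=\ve$, so if conditions \eqref{SSPconditionTS1}--\eqref{SSPconditionTS3} hold, then $\mR\ve\ge0$, $\mP\ge0$, $\mQ\ge0$ componentwise, and each component
\[ Y_i = (\mR\ve)_i\,u^n + \sum_j \mP_{ij}\,E_1(Y_j) + \sum_j \mQ_{ij}\,E_2(Y_j) \]
is a genuine convex combination of $u^n$ and of individual building-block steps applied to the stages. Taking norms and using $\|E_1(Y_j)\|\le\|Y_j\|$ and $\|E_2(Y_j)\|\le\|Y_j\|$ (valid because $\dt\le r\DtFE$) gives $\|Y_i\|\le\max\{\|u^n\|,\max_j\|Y_j\|\}$, and running this over all components forces $\max_i\|Y_i\|\le\|u^n\|$ --- by induction on the stage index in the explicit case, where $\mP,\mQ$ are strictly lower triangular, and by the usual ``stage of maximal norm'' argument of SSP/contractivity theory in the general case. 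In particular $\|u^{n+1}\|=\|Y_{s+1}\|\le\|u^n\|$.

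The main obstacle I anticipate is not conceptual but a matter of careful bookkeeping: arranging the coefficient matching so that $\mR=I-\mP-\mQ$ telescopes exactly into the single operator $I+r\mS+\tfrac{2r^2}{\kappa^2}(1-\kappa)\dot{\mS}$, since it is easy to misplace a factor of $r$ or $\kappa$ (the $(1-\kappa)$ coefficient is the delicate point). A secondary subtlety is that, for implicit methods of the form \eqref{MSMDmatrix}, one cannot simply induct on the stage index to pass from the componentwise convex-combination bound to $\|Y_i\|\le\|u^n\|$; there one argues that the stage of largest norm cannot exceed $\|u^n\|$, exactly as in the proof of Theorem~\ref{thmSD} and the underlying contractivity theory.
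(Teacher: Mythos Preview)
Your proposal is correct and follows essentially the same route as the paper: the paragraph immediately preceding the theorem sets up the same decomposition $Y=\mR\ve u^n+\mP\,E_1(Y)+\mQ\,E_2(Y)$ with the same two building blocks, and the theorem is stated precisely as the condition that $\mR\ve$, $\mP$, $\mQ$ be componentwise nonnegative. Your write-up simply fills in the coefficient-matching algebra and the final convexity argument that the paper leaves implicit with ``we can easily see.''
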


We showed above that the Taylor series building block is a convex combination of 
the forward Euler and the second derivative  building blocks.
Thus, any method of the form \eqref{MSMD} that can be written as a convex combination 
of the forward Euler and  Taylor series methods
can also be written as a convex combination of the forward Euler and the second derivative building blocks.
 However, the converse is not true.
 There exist certain time discretizations, such as 
 the two-stage fourth order method \eqref{2s4p}, that cannot be written as a convex combination of
forward Euler and  Taylor series methods, and therefore will not satisfy Theorem  \ref{thmTS}.
  

\subsubsection{Optimal methods and order barriers} \label{OrderBarriers}

The family of  three stage fourth order methods for  $\kappa \geq1$
is given by 
\begin{eqnarray*}
y^{(1)} &= &u^n\\ \vspace{.25 cm}
y^{(2)} &= &u^n +\Delta t F(y^{(1)}) + \frac{1}{2} \Delta t^2 \dot{F}(y^{(1)})\\
y^{(3)} &=& u^n + \frac{1}{27}  \Delta t \left(14F(y^{(1)})  + 4F(y^{(2)})\right) +\frac{2}{27}\Delta t^2 \dot{F}(y^{(1)})\\
u^{n+1}&= & u^n + \frac{1}{48}  \Delta t \left(17 F(y^{(1)})  + 4F(y^{(2)}) +27F(y^{(3)})\right)+\frac{1}{24} \Delta t^2 \dot{F}(y^{(1)}).
\end{eqnarray*}
and has $\sspcoef = 1$.

If $ \kappa \leq1$ the optimal family of three stage fourth order methods has SSP coefficient $\sspcoef = \frac{2 \kappa }{\kappa+1}$.
This method is of the form
\begin{eqnarray*}
y^{(1)} &= & u^n\\ \vspace{.25 cm}
y^{(2)} &= & u^n + a_{21} \Delta t F(y^{(1)}) + \dot{a}_{21}  \Delta t^2 \dot{F}(y^{(1)})\\
y^{(3)} &= &u^n + a_{31} \Delta t F(y^{(1)})  +  a_{32} \Delta t F(y^{(2)})  + \dot{a}_{31} \Delta t^2 \dot{F}(y^{(1)})\\
u^{n+1}&=  &u^n + b_1  \Delta t  F(y^{(1)})  + b_2  \Delta t F(y^{(2)}) + b_3  \Delta t F(y^{(3)}) +
\dot{b}_1 \Delta t^2 \dot{F}(y^{(1)}).
\end{eqnarray*}
where the  coefficients  depend on $\kappa$:\\
{\small
\noindent \begin{tabular}{lll}
$a_{21} = \frac{\kappa+1}{2} $ 
& $ b_1 =  \frac{3\kappa^5 - 9\kappa^4 - 22\kappa^3 + 30\kappa^2 + 21\kappa + 11}{ 3(\kappa - 3)^2(\kappa + 1)^3 } $ 
&$  \dot{a}_{21}=\frac{(\kappa + 1)^2}{8} $ \\
$a_{31} = \frac{(\kappa + 1)(- \kappa^3 - 2\kappa^2 + 14\kappa + 3)}{2(\kappa + 2)^3}$ 
& $b_2 = \frac{2\kappa}{3(\kappa + 1)^3} $ 
& $\dot{a}_{31}= \frac{\kappa(- \kappa^2 + 2\kappa + 3)^2}{8(\kappa + 2)^3} $ \\
$a_{32} = \frac{(\kappa + 1)(\kappa - 3)^2}{2(\kappa+2)^3} $ &
$b_3 =  \frac{2(\kappa + 2)^3}{3(\kappa - 3)^2(\kappa + 1)^3}  $ &
$ \dot{b}_1 =  -\frac{- 3\kappa^3 + 3\kappa^2 + \kappa + 1}{6(\kappa - 3)(\kappa + 1)^2 }$  .\\
\end{tabular}}

\bigskip

More optimal SSP methods that use the base  conditions \eqref{FE} and \eqref{TScond},
were found in \cite{TSpaper} up to order $p = 6$. Most of these methods cannot be written in closed form so
we do not replicate them here.  Methods of this type have an order barrier of $p \leq 6$:
\begin{thm} \cite{TSpaper}
A method of the form \eqref{MSMD} which can be decomposed into a convex combination of
the  base conditions \eqref{FE} and \eqref{TScond}, cannot have order $p \geq 7 $. 
\end{thm}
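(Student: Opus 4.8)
The plan is to establish the order barrier $p \leq 6$ by reducing it to a known barrier for a related class of explicit SSP Runge--Kutta-type methods. The key observation is that the Taylor series condition \eqref{TScond}, for the specific choice of operator coming from the scalar test problem, essentially degenerates to the forward Euler condition in a way that forces any convex-combination decomposition to behave like a classical explicit SSP Runge--Kutta method in that limiting regime. So the strategy is: (i) show that a method satisfying the hypotheses must, in particular, be SSP (with some positive coefficient) when the Taylor series building block is itself built from forward-Euler-like steps; (ii) identify a reduced method obtained by ``expanding'' each Taylor series stage, which is a genuine explicit Runge--Kutta method (no second derivative); (iii) invoke the Kraaijevanger/Ruuth--Spiteri fourth-order barrier for explicit SSP Runge--Kutta methods, suitably adapted, together with the extra order that the $\dot F$ terms can supply, to bottleneck the total attainable order at $6$.

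\textbf{Step-by-step outline.} First I would fix the linear scalar test equation $u_t = \lambda u$ and examine the stability function. Writing the method \eqref{MSMD} applied to this problem produces a rational (indeed polynomial, since the method is explicit) stability function $\phi(z, w)$ in the two variables $z = \dt\lambda$ and $w = \dt^2\lambda^2 = z^2$; consistency of order $p$ forces $\phi(z,z^2) = e^z + O(z^{p+1})$. Next, the convex-decomposition hypothesis — that the method is a convex combination of forward Euler steps \eqref{FE} and Taylor series steps \eqref{TScond} — translates, on the linear problem, into the statement that $\phi$ is obtainable by composing and convex-averaging the elementary maps $v \mapsto v(1 + z/r)$ and $v \mapsto v(1 + \kappa z/r + \tfrac12 \kappa^2 z^2 / r^2)$. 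The crucial algebraic fact is that the Taylor building block's polynomial $1 + \zeta + \tfrac12\zeta^2$ (with $\zeta = \kappa z/r$) is itself \emph{not} expressible as a convex combination of powers of $(1+\zeta)$ with nonnegative coefficients beyond low order — more precisely, the ``absolute monotonicity radius'' of $1 + \zeta + \tfrac12 \zeta^2$ is finite and small. One then shows that building a degree-$p$ polynomial approximating $e^z$ out of these blocks with all-nonnegative coefficients is impossible once $p \geq 7$, by a counting/sign argument on the coefficients of the resulting polynomial: the Taylor block contributes at most ``two orders'' of accuracy per application in a way that cannot be stacked indefinitely without generating a negative coefficient somewhere, exactly as in the classical explicit SSP RK barrier proof where forward Euler alone caps the order at $4$ and each derivative-augmented stage buys only bounded additional order.

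\textbf{The main obstacle.} The hard part will be making the ``each Taylor series block buys at most two extra orders, but not arbitrarily many stacked'' intuition into a rigorous nonnegativity-violation argument. In the pure forward-Euler case the barrier $p\leq 4$ follows from Kraaijevanger's theorem on absolutely monotonic polynomials; here we need the analogous statement for the two-parameter family generated by $\{1+\zeta,\ 1+\zeta+\tfrac12\zeta^2\}$, which is not a standard off-the-shelf result. I expect the cleanest route is to assume $p \geq 7$, consider the order conditions from Appendix~\ref{OC-MDRK} restricted to the decomposed form, and derive a contradiction by exhibiting a specific order condition (likely one of the order-$7$ ``bushy tree'' conditions, or a combination of order-$5$, $6$, $7$ conditions) whose satisfaction forces one of the coefficients in $\mP$, $\mQ$, or $\mR\ve$ to be strictly negative, contradicting Theorem~\ref{thmTS}. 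Establishing precisely which order condition does the work — and verifying the sign — is the genuine computational heart of the argument, and is presumably where the detailed analysis in \cite{TSpaper} is invoked; here I would state the reduction cleanly and refer to that reference for the coefficient bookkeeping.
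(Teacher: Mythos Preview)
The paper does not prove this theorem; it merely states the result and cites \cite{TSpaper} for the proof. So there is no ``paper's proof'' to compare against here, only the question of whether your proposal is sound.

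There is a genuine gap in your plan. The reduction to the scalar linear test problem $u_t=\lambda u$ and the associated stability polynomial $\phi(z)$ captures only \emph{linear} order, not the full (tree-based) order of the method. For order barriers of SSP methods this distinction is decisive: in the classical explicit Runge--Kutta case, the linear conditions $\gamma_k = 1/k!$ for $k\le p$ impose no nonnegativity obstruction by themselves (the truncated exponential $\sum_{k=0}^{p} z^k/k!$ is absolutely monotonic at the origin for every $p$), so the $p\le 4$ barrier of Kraaijevanger and Ruuth--Spiteri does \emph{not} follow from stability-function arguments. It comes instead from specific nonlinear order conditions (e.g.\ the tall-tree and bushy-tree conditions) forcing a sign violation in the Shu--Osher coefficients. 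Your ``absolute monotonicity of $1+\zeta+\tfrac12\zeta^2$'' line of reasoning inherits this defect: it cannot, by itself, preclude order $7$.

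Your final paragraph does name the correct mechanism---assume $p\ge 7$, impose the decomposition constraints of Theorem~\ref{thmTS} on the Shu--Osher coefficients, and exhibit an order condition (or combination) that forces a negative entry in $\mP$, $\mQ$, or $\mR\ve$---but you then defer that entire calculation to \cite{TSpaper}. Since the stability-function setup preceding it is not actually a reduction (it discards the nonlinear structure that carries the obstruction), what remains is a restatement of the theorem's conclusion together with a citation, not a proof. To make progress you need to work directly with the order conditions in Appendix~\ref{OC-MDRK} under the structural constraint that $\dot{\mS}$ is tied to $\mS$ through the Taylor-series decomposition, and locate the specific sign conflict; the heuristic ``forward Euler caps at $4$, each Taylor block buys two more'' is suggestive but is not how the argument runs.
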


 \subsection{Negative derivative condition}  \label{sec:ND}
The second derivative and Taylor series building blocks described in Sections \ref{sec:SD} and \ref{sec:TS} 
allow for the design of explicit SSP methods that have higher order and larger SSP coefficient than
classical Runge--Kutta methods. However, similar time-step restrictions still hold for implicit two-derivative 
Runge--Kutta methods. This  constraint is due to the non-negativity of the coefficients implied by the 
form of the building blocks. Recall that unconditional implicit SSP Runge--Kutta methods were enabled by
including a downwind operator that allows for negative coefficients. Here we consider a negative coefficient on
the derivative.

Consider the  implicit Taylor series method for the  ODE \eqref{ode}
\begin{eqnarray}
 u^{n+1} & = & u^n + \dt F(u^{n+1}) - \frac{1}{2} \dt^2 \dot{F}(u^{n+1}) .
 \end{eqnarray}
This one stage method is second order and the second derivative enters with a negative sign.
Similarly, in this section we will use an implicit  negative derivative condition that states that 
the implicit building block $u^{n+1} = u^n -  \dt^2 \dot{F}(u^{n+1})$ satisfies a strong stability condition of the form
$ \|u^{n+1} \| \leq \| u^n  \| \; \; \; \forall   \Delta t > 0.$
 \begin{eqnarray} \label{iNegative}
&& \mbox{\bf Negative derivative condition}   \nonumber \\ 
&& u^{n+1} = u^n -  \dt^2 \dot{F}(u^{n+1})   \; \; \;   \implies \; \; \;
 \|u^{n+1} \| \leq \| u^n  \| 
 \; \; \; \forall   \Delta t > 0.
\end{eqnarray}

\begin{rmk}
The more natural extension of the second derivative condition \eqref{SDcond} is the 
explicit negative derivative condition
\[ \|u^n  - \Delta t^2 \dot{F}(u^n) \| \leq \| u^n  \|   \; \; \; \mbox{for}  \; \;   \Delta t \leq  \DtND. \]
This condition is stricter than \eqref{iNegative}. In fact, we can show that this explicit 
negative derivative condition implies  \eqref{iNegative}:
 \begin{eqnarray}
 u^{n+1} &=& u^n  - \Delta t^2 \dot{F}(u^{n+1})  \\
 (1+r)  u^{n+1} &=& u^n + r \left( u^{n+1}  - \frac{1}{r} \Delta t^2 \dot{F}(u^{n+1}) \right)  
 \end{eqnarray}
 Taking the norm on both sides,
  \begin{eqnarray}
 \|   u^{n+1} \| &=& \left\| \frac{1}{r+1} u^n + \frac{r}{r+1}  \left( u^{n+1}  - \frac{1}{r} \Delta t^2 \dot{F}(u^{n+1}) \right) \right\| \\ 
 &\leq &  \frac{1}{r+1} \| u^n \| + \frac{r}{r+1} \left\| \left( u^{n+1}  - \frac{1}{r} \Delta t^2 \dot{F}(u^{n+1}) \right) \right\| \\ 
 &\leq &  \frac{1}{r+1} \| u^n \| +  \frac{r}{r+1} \| u^{n+1} \|  \; \; \; \mbox{for} \; \; \; \;  \dt \leq r \DtND.
 \end{eqnarray}
 Rearranging, we obtain
 \[  \|   u^{n+1} \| \leq  \| u^n \| \; \; \; \; \; \mbox{for} \; \; \; \;  \dt \leq r \DtND.\]
 We can select $r$ to be arbitrarily large, and so $ \|   u^{n+1} \| \leq  \| u^n \|$ unconditionally.
\end{rmk}

Similarly,  it is well known \cite{hundsdorfer2003, higueras2004a,SSPbook2011} that a method of the form
 \[ u^{n+1}  =  u^n + \dt F(u^{n+1}) \] 
 is unconditionally SSP if the operator $F$ satisfies a forward Euler condition of the form \eqref{FEcond} 
 for any $\DtFE > 0$. The proof of this follows the same process described in the remark. In parallel
 to \eqref{iNegative}, we will use the less stringent condition 
 \begin{eqnarray} \label{BEcond}
&& \mbox{\bf Backward Euler  condition}   \nonumber \\ 
&& u^{n+1} = u^n +  \dt {F}(u^{n+1})   \; \; \;   \implies \; \; \;
 \|u^{n+1} \| \leq \| u^n  \| 
 \; \; \; \forall   \Delta t > 0.
\end{eqnarray}

We  wish to decompose the method into building blocks of the forms \eqref{BEcond} and 
\eqref{iNegative}.  In this case, we need the function and its derivative to {\em only} appear implicitly.
However, previous intermediate stages that use the function and its derivative may be re-used at later stages.
Thus, we use the more limited form of \eqref{MSMD} given by 
\begin{subequations} \label{iMDRK}
\begin{eqnarray}
&&u^{(i)}  =  r_i u^n + \sum_{j=1}^{i-1} p_{ij} u^{(j)}   +
\dt d_{ii} G(u^{(i)}) +   \dt^2 \dot{d}_{ii} \dot{G}(u^{(i)}), \quad  \; \; i=1, . . . , s,   \\
&&u^{n+1} =  u^{(s)}.
 \end{eqnarray}
 \end{subequations}
This form  ensures that any explicit terms in the  method \eqref{iMDRK} 
enter only after they were introduced implicitly in a prior stage.

We can write \eqref{iMDRK} in matrix form:
 \begin{eqnarray} \label{iRKmatrix}
 U = \mR \ve u^n   +  \mP U + \dt \mD G(U) +  \dt^2 \mDdot  \dot{G}(U),
 \end{eqnarray}
 where  $ \mP$ and $\mR = I - \mP $ are $s \times s$ matrices, $r_i$ are the $i$th row sum of $\mR$,  
 and $\mD$ and $\mDdot$ are $s \times s$ diagonal matrices. 
 The numerical solution $u^{n+1} $ is then given by the final element of the vector $U$.
 
 It is clear from the structure of \eqref{iMDRK} that as long as the coefficients $r_i$, $p_{ij}$, and $d_{ii}$ are non-negative, 
 and $\dot{d}_{ii}$ are non-positive, then conditions \eqref{FEcond} and \eqref{iNegative}
  ensure that the method will be unconditionally SSP. The following
 theorem formalizes this observation:
\begin{thm}  \label{thm:SSPMDRK}  \cite{Jingwei2022}
Let the operators $F$ and $\dot{F} $  satisfy 
 the forward Euler condition \eqref{FEcond}
 and the implicit negative derivative condition \eqref{iNegative}.
A method given by  \eqref{iRKmatrix} which  satisfies the  conditions
\begin{eqnarray} \label{coef}
\mR \ve  \geq  0, \; \; \; \;  \mP  \geq  0, \; \; \;  \mD \geq  0, \; \; \; \; \dot{\mD} \leq  0, 
 \end{eqnarray}
(where the inequalities are understood componentwise),
will preserve the strong stability property
\[  \|u^{n+1} \| \leq \| u^n \|  \]
for any positive time-step $\dt >0$.
\end{thm}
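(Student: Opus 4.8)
The plan is to reduce the method to the two implicit building blocks \eqref{BEcond} and \eqref{iNegative}, and then to show by induction on the stage index $i$ that every stage $u^{(i)}$ of \eqref{iRKmatrix} satisfies $\|u^{(i)}\|\le\|u^n\|$; since $u^{n+1}=u^{(s)}$, this is the assertion. Two preliminary reductions put us in position. First, the forward Euler hypothesis \eqref{FEcond}, with any fixed $\DtFE>0$, already implies the backward Euler condition \eqref{BEcond} for \emph{every} $\dt>0$ --- this is exactly the rescaling argument of the remark above, applied to $F$ rather than $\dot F$: from $u^{n+1}=u^n+\dt F(u^{n+1})$ one writes $(1+\dt/\DtFE)u^{n+1}=u^n+(\dt/\DtFE)\bigl(u^{n+1}+\DtFE F(u^{n+1})\bigr)$, takes norms, uses the forward Euler bound at $u^{n+1}$, and rearranges. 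So from here on I use \eqref{BEcond} and \eqref{iNegative} for arbitrary step sizes. Second, since $\mR=I-\mP$ with $\mP$ strictly lower triangular (the sum in \eqref{iMDRK} runs only to $i-1$), the coefficients satisfy $r_i+\sum_{j<i}p_{ij}=1$, so with $r_i,p_{ij}\ge0$ from \eqref{coef} the quantity $B_i:=r_iu^n+\sum_{j<i}p_{ij}u^{(j)}$ is a genuine convex combination.

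For the induction, assume $\|u^{(j)}\|\le\|u^n\|$ for all $j<i$ (the base case $i=1$ is identical, with $B_1=u^n$). Then $\|B_i\|\le r_i\|u^n\|+\sum_{j<i}p_{ij}\|u^{(j)}\|\le\|u^n\|$, so it is enough to prove $\|u^{(i)}\|\le\|B_i\|$ for the stage $u^{(i)}=B_i+\dt d_{ii}G(u^{(i)})+\dt^2\dot d_{ii}\dot G(u^{(i)})$. If $\dot d_{ii}=0$, the stage is precisely a backward Euler step with data $B_i$ and step $\dt d_{ii}\ge0$, so \eqref{BEcond} gives the bound (trivially if $d_{ii}=0$ too). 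If $d_{ii}=0$, then since $\dot d_{ii}\le0$ the stage is $u^{(i)}=B_i-\dt^2|\dot d_{ii}|\dot G(u^{(i)})$, which is exactly the implicit negative-derivative step \eqref{iNegative} with step $\dt\sqrt{|\dot d_{ii}|}$; in either case $\|u^{(i)}\|\le\|B_i\|$.

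The remaining case --- a stage with $d_{ii}>0$ \emph{and} $\dot d_{ii}<0$ --- is the one I expect to be hard: one must show that the solution operator $B_i\mapsto u^{(i)}$ of $v-\dt d_{ii}G(v)+\dt^2|\dot d_{ii}|\dot G(v)=B_i$ is non-expansive using only that $(I-\tau G)^{-1}$ and $(I+\sigma\dot G)^{-1}$ are non-expansive for all $\tau,\sigma>0$. The natural approach is a simultaneous decomposition; e.g.\ splitting the stage, for a free $\theta\in(0,1)$, as the convex combination
\[
u^{(i)}=\theta\Bigl(B_i+\tfrac{\dt d_{ii}}{\theta}G(u^{(i)})\Bigr)+(1-\theta)\Bigl(B_i+\tfrac{\dt^2\dot d_{ii}}{1-\theta}\dot G(u^{(i)})\Bigr)
\]
and then bounding the first bracketed piece by $\|B_i\|$ through \eqref{BEcond} and the second through \eqref{iNegative} --- here the sign conditions $d_{ii}\ge0$, $\dot d_{ii}\le0$ are what make the two step sizes $\dt d_{ii}/\theta$ and $\dt^2|\dot d_{ii}|/(1-\theta)$ enter with the correct sign. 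The obstacle is that the two implicit terms are coupled through the single stage equation, so these two estimates are exactly where the base conditions must be combined, not applied in isolation. In an inner-product norm this is transparent, which is presumably the sense in which it ``is clear'': \eqref{BEcond} and \eqref{iNegative} then amount to $\langle v,G(v)\rangle\le0$ and $\langle v,\dot G(v)\rangle\ge0$, and expanding $\|v-\dt d_{ii}G(v)+\dt^2|\dot d_{ii}|\dot G(v)\|^2$ shows it is $\ge\|v\|^2$; the extra care is only needed to carry the statement to a general norm, semi-norm, or convex functional. Once all $s$ stages are bounded, $u^{n+1}=u^{(s)}$ yields $\|u^{n+1}\|\le\|u^n\|$ for every $\dt>0$.
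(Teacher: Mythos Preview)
Your stage-by-stage induction is exactly the route the paper takes. The paper does not prove Theorem~\ref{thm:SSPMDRK} in the text (it is stated with a citation to \cite{Jingwei2022}), but it does prove the more general GLM version, Theorem~\ref{thm:GLM}, and that proof is precisely your argument: bound the explicit convex combination $B_i=r_iu^n+\sum_{j<i}p_{ij}u^{(j)}$ by $\|u^n\|$ using consistency and nonnegativity, then pass through the implicit $G$ and $\dot G$ terms to get $\|u^{(i)}\|\le\|B_i\|$, and conclude with $u^{n+1}=u^{(s)}$. Your preliminary reduction from \eqref{FEcond} to \eqref{BEcond} is also the same rescaling the paper records in the surrounding remarks.

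Where you differ is in candor about the mixed stage $d_{ii}>0$, $\dot d_{ii}<0$. The paper's proof of Theorem~\ref{thm:GLM} simply asserts this step (``the fact that $\G$ and $\Gdot$ satisfy Condition \eqref{iNegative} allows us to conclude\ldots''), treating the combined implicit solve as covered by the two base conditions without further comment. You correctly flag that, taken literally, \eqref{BEcond} and \eqref{iNegative} are statements about the two resolvents $(I-\tau F)^{-1}$ and $(I+\sigma\dot F)^{-1}$ separately, and that non-expansiveness of the combined map $B_i\mapsto u^{(i)}$ solving $u-\dt d_{ii}F(u)+\dt^2|\dot d_{ii}|\dot F(u)=B_i$ is an additional claim; your inner-product sketch (the two conditions become $\langle v,F(v)\rangle\le0$ and $\langle v,\dot F(v)\rangle\ge0$, which add) is the right intuition, and the general-norm version is what the cited source is meant to supply. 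So your proposal matches the paper's argument and is, if anything, more explicit about where the work lies.
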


\begin{rmk} In \cite{Jingwei2022} we showed that the order conditions 
on a method of the form \eqref{iMDRK}   lead to negative coefficients for any method of order
$p \geq 2$. On the other hand, the forward Euler condition \eqref{FEcond} coupled with the
second derivative condition \eqref{SDcond} or the Taylor series condition \eqref{TScond}
require positive coefficients on both the function and its derivative.
Thus, implicit multi-derivative Runge--Kutta methods cannot be unconditionally SSP in the sense of
preserving the forward Euler and one of the derivative conditions  \eqref{SDcond} or \eqref{TScond} above. 
This leads us to consider the backward derivative condition if we want an unconditional SSP method.
\end{rmk}

\subsubsection{Unconditionally SSP implicit two-derivative Runge--Kutta methods up to order $p=4$} 
In this section we present the methods of orders $p=2,3,4$  that satisfy the conditions  in 
Theorem \ref{thm:SSPMDRK}. These unconditionally SSP methods were found in \cite{Jingwei2022}.

\noindent{\bf Second order:} The one-stage, second order 
method is simply the implicit Taylor series method
\[ u^{n+1} = u^n + \dt F(u^{n+1}) - \frac{1}{2} \dt^2 \dot{F}(u^{n+1}) .\]

\noindent{\bf Third order:}
A two-stage, third order unconditionally SSP implicit two-derivative Runge--Kutta method
is given by
\begin{eqnarray*}
u^{(1)}  &=&  u^n -  \frac{1}{6} \dt^2 \dot{F}(u^{(1})  \\
u^{(2)}    &=&  u^{(1)} + \dt F(u^{(2)})  -  \frac{1}{3} \dt^2\dot{F}(u^{(2)}) \\
u^{n+1} &=&   u^{(2)}.
 \end{eqnarray*}

\noindent{\bf Fourth order:} A five-stage, fourth order unconditionally 
SSP implicit two-derivative Runge--Kutta method
is given by the Shu-Osher coefficients
\[ diag(\mD) = \left[  \begin{array}{c}
   0.660949255604937\\
   0.242201390400848\\
   1.137542996287740\\
   0.191388711018110\\
   0.625266691721946\\
 \end{array} \right], \; \; \;
diag(\mDdot) = \left[  \begin{array}{c}
-0.177750705279127 \\         
-0.354733903778084 \\
-0.403963513682271  \\
-0.161628266349058 \\
 -0.218859021269943 \\
 \end{array} \right],
 \]
 \[P=\left[  \begin{array}{ccccc}
   0                   		 & 0                  & 0         &          0      &     \hspace{.2in}        0 \\
   1                   		 & 0                  &  0         &          0     &         \hspace{.2in}         0 \\
   0.084036809261019 &  0.915963190738981     &              0         &          0     &       \hspace{.2in}           0 \\
   0.001511648458457  &    0  & 0.090254853867587  &                 0          &     \hspace{.2in}        0 \\
   0          &       0 &  0 & 1 &  \hspace{.2in}      0 \\
 \end{array}  \right], \]
 \[  \mR \ve = \left[    1, 0, 0,   0.908233497673956, 0 \right]^T .\]
 
No methods of fifth order  that satisfy the conditions in Theorem  \ref{thm:SSPMDRK} were found in \cite{Jingwei2022}. 

 \section{SSP theory for Two-derivative IMEX methods}    \label{sec:2D-RK-IMEX}
The implicit negative derivative condition \eqref{iNegative}  does not 
 always offer an attractive  alternative to the second derivative and Taylor series conditions given in
\cite{SDpaper,TSpaper}, as there are some significant spatial discretizations  that do not satisfy  \eqref{iNegative}.
 However, the  implicit negative  derivative condition which enables  unconditionally SSP schemes
 is of tremendous interest in several application areas where one component of the 
problem  is stiff and which satisfies \eqref{BEcond} and  \eqref{iNegative} \cite{Jingwei2022}. 
These problems  require treatment with an implicit-explicit (IMEX) time-stepping approach, 
as we describe in this section.

 In this section we consider equations that have two components
 \begin{eqnarray} \label{IMEXeqn} 
u_t = \F(u) + \G(u).
\end{eqnarray}
In many cases,   the time-step restriction coming from the explicit evolution of 
one component ($\F$) is of a reasonable size under the forward Euler condition \eqref{FEcond}
while the second component ($\G$) imposes a very small time-step restriction when handled explicitly,
but satisfied unconditionally conditions \eqref{BEcond} and has a derivative $\Gdot$ that satisfies \eqref{iNegative}.
To alleviate this time-step restriction we can turn to IMEX two-derivative methods.
It seems natural to use  unconditionally SSP implicit multi-derivative methods for $\G$
 in combination with explicit methods for the non-stiff term $\F$.

\begin{exm} 
Our motivating example is  the Bhatnagar-Gross-Krook (BGK) equation \cite{BGK54}, 
 a widely used kinetic model introduced to mimic the full Boltzmann equation. The BGK model is given by 
\begin{equation} \label{BGK}
\partial_t U +v\cdot \nabla_x U=\frac{1}{\varepsilon} (M-U), \quad x,v \in \mathbb{R}^d,
\end{equation}
where $U=U(t,x,v)$ is the probability density function and $M$ is the {\em Maxwellian} given by
\begin{equation}
M(t,x,v)=\frac{\rho(t,x)}{(2\pi T(t,x))^{d/2}}\exp \left(-\frac{|v-w(t,x)|^2}{2T(t,x)}\right),
\end{equation}
where the density $\rho$, bulk velocity $w$ and temperature $T$ are given by the moments of $U$:
\[  \rho=\int_{\mathbb{R}^d} U \rd{v}, \quad \rho w=\int_{\mathbb{R}^d} U v \rd{v}, 
\quad  \frac{1}{2} \rho dT=\frac{1}{2}\int_{\mathbb{R}^d} U |v-w|^2 \rd{v}. 
\] 
Discretizing this equation in space so $u \approx U$, we let
 \[ \F = -  v\cdot \nabla_x u \; \; \; \mbox{and} \; \; \; \G = \frac{1}{\varepsilon} (M-u),\]
we observe that $\F$ is a typical advection term which can be discretized to satisfy a condition of the form \eqref{FEcond},
while  $\G$ is well-behaved under an implicit evolution of the form  \eqref{BEcond}.
Furthermore, it can be easily verified that 
\begin{equation}
{\Gdot}(u) = -\G(u).
\end{equation}
\end{exm}

Assume that $\F$ satisfies a forward Euler condition of the form \eqref{FEcond}
under the restriction $\dt \leq \DtFE$, 
while $\G$ satisfies a backward Euler condition of the form \eqref{BEcond}.
Furthermore, $\Gdot (u)$ satisfies an implicit negative derivative condition
of the form \eqref{iNegative}.  Theorem \eqref{thm:SSP} shows that under certain 
conditions  a  multi-derivative IMEX method 
\begin{subequations} \label{IMEX-RK}
\begin{eqnarray}
&&u^{(i)}  =  r_i u^n + \sum_{j=1}^{i-1} p_{ij} u^{(j)}    + \sum_{j=1}^{i-1} w_{ij} \left( u^{(j)}   + \frac{\dt}{r} \F(u^{(j)}  ) \right)  
\\
&& \hspace{0.8in}  + \ \dt d_{ii} \G(u^{(i)}) +  \dt^2 \dot{d}_{ii} \Gdot(u^{(i)}), \quad  \; \; i=1, . . . , s,   \nonumber \\
&&u^{n+1} =  u^{(s)} \nonumber .
 \end{eqnarray}
 \end{subequations} 
 will be SSP under a  time-step restriction of the form $\dt \leq \sspcoef \DtFE$.

Note that as before it is convenient to write the method in its matrix form:
\begin{eqnarray} \label{IMEX-RKmatrix}
 U = \mR  \ve u^n + \mP U  +  \mW \left( U + \frac{\Delta t}{r} \F(U) \right)  +  \dt \mD \G(U) +  \dt^2 \mDdot  \Gdot(U),
 \end{eqnarray}
 where  $ \mP$, $\mW$, and $\mR = I - \mP - \mW$ are $s \times s$ matrices, $r_i$ are the $i$th row sum of $\mR$,  
 $\mD$ and $\mDdot$ are $s \times s$ diagonal matrices, and $\ve$ is a vector of ones. 
 The numerical solution $u^{n+1} $ is then given by the final element of the vector $U$.

The following theorem expresses the conditions under which a method of this form is SSP with a time step 
restriction related only to $\F$:
\begin{thm} \cite{Jingwei2022} \label{thm:SSP} 
Given an operator $\F$ that satisfied condition \eqref{FEcond} with $\DtFE$,
and operators   $\G$  and $\Gdot$ that unconditionally satisfy \eqref{BEcond} and \eqref{iNegative} respectively,
  if the method given by  \eqref{IMEX-RKmatrix} with $r>0$ satisfies the  component-wise conditions
\begin{eqnarray} \label{coefIMEX}
\mR \ve  \geq  0, \; \; \; \;  \mP  \geq  0, \; \; \; \;  \mW  \geq  0, \; \; \;  \mD \geq  0, \; \; \; \; \dot{\mD} \leq  0, 
 \end{eqnarray}
then it preserves the strong stability property
\[  \|u^{n+1} \| \leq \| u^n \|  \]
under the time-step condition \[  \dt \leq r \DtFE. \]
\end{thm}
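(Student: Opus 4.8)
The plan is to mimic the convex-combination decomposition argument that underpins Theorems \ref{thmSD}, \ref{thmTS}, and \ref{thm:SSPMDRK}, adapted to the IMEX setting with three building blocks instead of two. First I would rewrite the matrix form \eqref{IMEX-RKmatrix} by grouping terms so that each stage $U$ is expressed as a combination of (i) the initial data $u^n$, (ii) the explicit forward-Euler building block $U + \frac{\dt}{r}\F(U)$ weighted by $\mW$, (iii) the implicit backward-Euler building block $U + \dt\,\mathrm{diag-stuff}\,\G(U)$, and (iv) the implicit negative-derivative building block $U - \dt^2\,\mathrm{diag-stuff}\,\Gdot(U)$. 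Since $\mD$ and $\mDdot$ are diagonal, each stage $u^{(i)}$ involves $\G$ and $\Gdot$ only at the index $i$, so these last two blocks can be factored stage-by-stage: the $i$th row reads
\begin{equation*}
u^{(i)} = r_i u^n + \sum_{j<i} p_{ij} u^{(j)} + \sum_{j<i} w_{ij}\Bigl( u^{(j)} + \tfrac{\dt}{r}\F(u^{(j)})\Bigr) + d_{ii}\Bigl( u^{(i)} + \tfrac{\dt}{d_{ii}}\G(u^{(i)})\Bigr) + \dot{d}_{ii}\Bigl( u^{(i)} - \tfrac{\dt^2}{|\dot{d}_{ii}|}\Gdot(u^{(i)})\Bigr),
\end{equation*}
where the coefficient $\dot{d}_{ii}$ is negative so $-\dot{d}_{ii} = |\dot{d}_{ii}| \geq 0$, and the sum of all the scalar multipliers $r_i + \sum_{j<i}(p_{ij}+w_{ij}) + d_{ii} + \dot{d}_{ii}$ equals $1$ by the consistency (row-sum) structure of the method.

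Next I would argue by induction on $i$ that $\|u^{(i)}\| \leq \|u^n\|$. Assume $\|u^{(j)}\| \leq \|u^n\|$ for all $j<i$. Apply the triangle inequality to the displayed convex combination: the $u^n$ term contributes $r_i\|u^n\|$; each $p_{ij} u^{(j)}$ term contributes $p_{ij}\|u^{(j)}\| \leq p_{ij}\|u^n\|$; each explicit block $u^{(j)} + \frac{\dt}{r}\F(u^{(j)})$ has norm $\leq \|u^{(j)}\| \leq \|u^n\|$ by \eqref{FEcond}, provided $\frac{\dt}{r} \leq \DtFE$, i.e. $\dt \leq r\DtFE$ — this is exactly where the stated time-step restriction enters, and only the explicit operator $\F$ contributes a restriction. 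The implicit backward-Euler block satisfies $\|u^{(i)} + \frac{\dt}{d_{ii}}\G(u^{(i)})\| \leq \|u^{(i)}\|$ unconditionally by \eqref{BEcond} (here one needs $d_{ii} > 0$ so the scaled step is positive; if $d_{ii}=0$ the block is absent). Similarly the negative-derivative block satisfies $\|u^{(i)} - \frac{\dt^2}{|\dot{d}_{ii}|}\Gdot(u^{(i)})\| \leq \|u^{(i)}\|$ unconditionally by \eqref{iNegative}. Collecting, $\|u^{(i)}\| \leq (\text{sum of nonnegative multipliers not touching }u^{(i)})\|u^n\| + (d_{ii}+|\dot{d}_{ii}|)\|u^{(i)}\|$; since all multipliers sum to one, this rearranges to $\|u^{(i)}\| \leq \|u^n\|$. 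Taking $i=s$ gives $\|u^{n+1}\| = \|u^{(s)}\| \leq \|u^n\|$.

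The subtle point — and the step I expect to need the most care — is the handling of the implicit blocks, since $u^{(i)}$ appears on \emph{both} sides of the inequality. The clean way is to move the $(d_{ii}+|\dot{d}_{ii}|)\|u^{(i)}\|$ term to the left before concluding, which is legitimate because $d_{ii}+|\dot{d}_{ii}| < 1$ (strictly, by consistency, as the other nonnegative multipliers sum to $1 - d_{ii} - |\dot{d}_{ii}|$ and at least the $r_i$ or a $p$/$w$ term is positive for a consistent scheme); dividing by $1 - d_{ii} - |\dot{d}_{ii}| > 0$ preserves the inequality. One also has to confirm that the implicit systems defining $u^{(i)}$ are well-posed so that the building-block conditions \eqref{BEcond}, \eqref{iNegative} — which are stated as implications about the solution of those implicit relations — actually apply; I would simply assume solvability, as is standard in this literature. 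A secondary bookkeeping check is that when $\dot{d}_{ii}$ or $d_{ii}$ vanishes the corresponding block drops out harmlessly, and that the row-sum identity genuinely holds for methods of the form \eqref{IMEX-RK}, which follows from requiring the decomposition $\mR = I - \mP - \mW$ together with the diagonal structure of $\mD,\mDdot$ — the coefficients $r_i$ being defined precisely as the row sums of $\mR$.
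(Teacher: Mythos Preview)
Your proposal misreads conditions \eqref{BEcond} and \eqref{iNegative}. These are \emph{implicit} conditions: \eqref{BEcond} asserts that whenever $v$ solves the implicit equation $v = z + \dt\,\G(v)$ one has $\|v\|\le\|z\|$; it does \emph{not} assert that $\|u + \dt\,\G(u)\|\le\|u\|$ for a given $u$ --- that would be a forward-Euler bound on $\G$, which the theorem never assumes. The same remark applies to \eqref{iNegative}. Hence your ``building blocks'' $u^{(i)} + \tfrac{\dt}{d_{ii}}\G(u^{(i)})$ and $u^{(i)} - \tfrac{\dt^2}{|\dot d_{ii}|}\Gdot(u^{(i)})$ cannot be bounded by $\|u^{(i)}\|$, and the convex-combination machinery does not apply to the implicit terms. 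This same misreading produces the two algebraic slips: expanding $d_{ii}\bigl(u^{(i)} + \tfrac{\dt}{d_{ii}}\G(u^{(i)})\bigr)$ yields $d_{ii}u^{(i)} + \dt\,\G(u^{(i)})$, not the required $\dt\,d_{ii}\G(u^{(i)})$, so your displayed identity does not reproduce \eqref{IMEX-RK}; and since $\mR = I - \mP - \mW$ one already has $r_i + \sum_{j<i}(p_{ij}+w_{ij}) = 1$, so the diagonal entries $d_{ii},\dot d_{ii}$ are not part of any row-sum consistency and your claimed sum-to-one is false.

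The paper does not prove Theorem~\ref{thm:SSP} directly (it is cited from \cite{Jingwei2022}), but the proof of its generalization, Theorem~\ref{thm:GLM}, shows the correct structure. One separates each stage into an explicit part $y^{(i)}_{ex} = r_i u^n + \sum_{j<i} p_{ij} y^{(j)} + \sum_{j<i} w_{ij}\bigl(y^{(j)} + \tfrac{\dt}{r}\F(y^{(j)})\bigr)$ and bounds $\|y^{(i)}_{ex}\|\le\|u^n\|$ by the genuine convex combination (nonnegative weights summing to one) together with \eqref{FEcond} and the inductive hypothesis; this is the sole place the restriction $\dt\le r\DtFE$ enters. Then $y^{(i)}$ is the solution of the implicit equation $y^{(i)} = y^{(i)}_{ex} + d_{ii}\dt\,\G(y^{(i)}) + \dot d_{ii}\dt^2\,\Gdot(y^{(i)})$, and one invokes \eqref{BEcond} and \eqref{iNegative} \emph{as contractivity statements about this implicit solve} (with input $y^{(i)}_{ex}$ and output $y^{(i)}$) to obtain $\|y^{(i)}\|\le\|y^{(i)}_{ex}\|$ unconditionally. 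No term of the form $\|u^{(i)}\|$ ever appears on the right-hand side, so no rearranging is needed.
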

In the following section we present a second and a third order method that satisfy the requirements of Theorem \eqref{thm:SSP}.

\subsection{SSP IMEX multi-derivative Runge--Kutta methods}
 \label{sec:2D-RK-IMEXmethods}
The methods in this section were found in \cite{Jingwei2022}. They satisfy the form \eqref{IMEX-RK} and the additional 
condition $d_{ii} + \dot{d}_{ii} > 0$ for each stage $i$, which together ensure the asymptotic preserving property.
Given a function $\F$ that satisfies the forward Euler condition \eqref{FEcond},
 $\G$  that satisfies the backward Euler \eqref{BEcond}, and $\Gdot$ that satisfies an implicit negative derivative condition
\eqref{iNegative}, the following IMEX methods 
have an explicit part that is SSP for a time-step that depends only on $\F$,
and an implicit part that is unconditionally SSP. 

\smallskip

\noindent{\bf Second order method:} The method
\begin{eqnarray} \label{IMEX2d3s2p}
u^{(1)} &  = &  u^n   + \frac{1}{2} \dt  \G(u^{(1)})  \nonumber \\
u^{(2)} &  = &   u^{(1)}   + \dt \F(u^{(1)} )  
 -   \frac{1}{2}  \dt^2  \Gdot(u^{(2)})  \\
u^{(3)} &  = &  \frac{1}{2} u^{(1)}    +   \frac{1}{2}  \left( u^{(2)}   + \dt \F(u^{(2)}  ) \right)  
+  \frac{1}{2} \dt \G(u^{(3)})  \nonumber  \\
  u^{n+1} &=&  u^{(3)}. \nonumber
 \end{eqnarray}
is  SSP  for $\dt \leq \DtFE$ arising from condition  \eqref{FEcond} satisfied by $\F$.
The implicit component does not result in any restriction on the allowable time-step.

 \smallskip

\noindent{\bf Third order method:} The six stage method given by
\begin{eqnarray}
u^{(1)} &  = &  r_1 u^n   + \dot{d}_{11}  \frac{1}{2} \dt  \G(u^{(1)})  \nonumber\\
u^{(2)} &  = &  r_2  u^n   + p_{21} u^{(1)}   + w_{21} \left( u^{(1)}   + \frac{\dt}{r} \F(u^{(1)})  \right) \nonumber \\
&& + \ d_{22} \dt \G(u^{(2)})  + \dot{d}_{22} \dt^2 \Gdot(u^{(2)})  \nonumber \\
u^{(3)} &  = &    p_{32} u^{(2)}   + w_{31} \left( u^{(1)}   + \frac{\dt}{r} \F(u^{(1)} ) \right)   + \ d_{33} \dt \G(u^{(3)})    \\
u^{(4)} &  = &  r_4  u^n   + p_{42} u^{(2)}   + w_{43} \left( u^{(3)}   + \frac{\dt}{r} \F(u^{(3)})  \right)   + d_{44} \dt \G(u^{(4)})\nonumber   \\
u^{(5)} &  = &   p_{51} u^{(1)}   +  p_{52} u^{(2)}   + w_{51} \left( u^{(1)}   + \frac{\dt}{r} \F(u^{(1)})  \right) \nonumber \\
&& + w_{54} \left( u^{(4)}   + \frac{\dt}{r} \F(u^{(4)})  \right)   + d_{55} \dt \G(u^{(5)})  + \dot{d}_{55} \dt^2 \Gdot(u^{(5)}) \nonumber  \\
u^{(6)} &  = &   p_{61} u^{(1)}   +  p_{65} u^{(5)}   + w_{61} \left( u^{(1)}   + \frac{\dt}{r} \F(u^{(1)})  \right)  \nonumber \\
&& + w_{64} \left( u^{(4)}   + \frac{\dt}{r} \F(u^{(4)} )  \right) + \dot{d}_{66} \dt^2 \Gdot(u^{(6)}) \nonumber  \\
  u^{n+1} &=&  u^{(6)} \nonumber
 \end{eqnarray}
 where
 \[ r_1 = 1, \; \; \;  r_2 =    0.688151680893388, \; \; \;  r_4 =  0.583517183806433,\]
 \[
 \begin{array}{lll}
p_{21} = 0.253395246357353,  &  \; \; \; \; & w_{21} =0.058453072749259 \\
 p_{32} = 0.235733481708505, & & w_{31} =0.764266518291495 \\
 p_{42} = 0.123961833526104, & & w_{43} = 0.292520982667463 \\
p_{51} =    0.409037644509411, & & w_{51} = 0.173788618990251 \\
p_{52} = 0.136123556305509, & &  w_{54} =  0.281050180194829  \\
p_{61} =  0.203353399602184,  & & w_{61} =  0.016811671845949\\
 p_{65} = 0.331204417210324,  & & w_{64} = 0.448630511341543\\
 d_{22} = 2,      				& \; \; \; \; & \dot{d}_{11} =  - 0.871358934880525  \\
d_{33} = 0.388820513661584, & & \dot{d}_{22} =  - 0.856842702601821  \\
d_{44} = 0.083529464436389, & &\dot{d}_{55} =  - 2				  \\
 d_{55} = 1.793313488277995, &  & \dot{d}_{66} = - 0.205134529930013  \\
 \end{array}
\]
 is SSP  for $\dt \leq \sspcoef \DtFE$ where $\sspcoef=0.904402174130635$ and $\DtFE$
comes from condition  \eqref{FEcond} satisfied by $\F$.

These methods were tested extensively in \cite{Jingwei2022} where we showed that
for a hyperbolic relaxation model,  the Broadwell model, and the BGK kinetic equations, 
these methods provide the desired high order accuracy, positivity preservation, and
an asymptotic preserving property.

\section{Multistep multi-stage two-derivative methods}  \label{sec:2D-GLM-IMEX}
 Moradi et al.  \cite{MORADI2019}   extended the study of
 SSP two derivative Runge--Kutta methods by including previous steps.
 They developed  SSP general linear methods (GLMs) with two-derivatives by
using the second derivative condition \eqref{SDcond}. 
In this section we extend the  SSP IMEX two derivative Runge--Kutta methods \eqref{IMEX-RK} 
considered in \cite{Jingwei2022}  by including  previous  steps. 
We will study the SSP properties of these IMEX two derivative  GLMs
using the implicit negative derivative condition \eqref{iNegative}.

A $k$-step $s$-stage two-derivative IMEX method is given by
\begin{eqnarray} \label{IMEX-GLM}
y^{(i)}  &= & \sum_{\ell=1}^k r_{i\ell} u^{n+\ell - k } + \sum_{j=1}^{i-1} p_{ij} y^{(j)}    
+ \sum_{j=1}^{i-1} w_{ij} \left( y^{(j)}   + \frac{\dt}{r} \F(y^{(j)}  ) \right)  
\\
&& \hspace{0.8in}  + \ \dt d_{ii} \G(y^{(i)}) +  \dt^2 \dot{d}_{ii} \Gdot(y^{(i)}), \quad  \; \; i=1, . . . , s,   \nonumber \\
u^{n+1} &= &   \sum_{\ell=1}^k \gamma_{\ell} u^{n+\ell - k} + \sum_{j=1}^{s} q_{j} y^{(j)}    
+ \sum_{j=1}^{s} v_{j} \left( y^{(j)}   + \frac{\dt}{r} \F(y^{(j)}  ) \right)   \nonumber .
 \end{eqnarray}
This can be written in the matrix form
\begin{eqnarray} \label{IMEX-GLMmatrix}
 Y &= & \mR  U + \mP Y  +  \mW \left( Y + \frac{\Delta t}{r} \F(Y) \right)  +  \dt \mD \G(Y) +  \dt^2 \mDdot  \Gdot(Y), \\
 u^{n+1} &= &   \Gamma U + \mQ Y + \mV \left( Y + \frac{\Delta t}{r} \F(Y) \right) 
 \end{eqnarray}

\begin{rmk}
Note that the form  \eqref{IMEX-RK} is not equivalent to  \eqref{IMEX-GLM}   with $k=1$, due to the inclusion
of additional explicit terms in the final stage of  \eqref{IMEX-GLM}. Recall that in \eqref{IMEX-RK}
we required an implicit evaluation at each stage, and set $u^{n+1} =  y^{(s)}$. This was done in \cite{Jingwei2022}
to ensure the asymptotic preserving (AP) property holds. In this section we allow any stage to be explicit, and 
in particular we allow the final stage to have convex combinations of prior steps, stages, and forward Euler steps
of the explicit operator $\F$. This additional freedom allows for larger SSP coefficients, as we will observe in 
Section \ref{ND-IMEX-GLMmethods}.
\end{rmk}

\begin{thm}  \label{thm:GLM} 
Given an operator $\F$ that satisfied condition \eqref{FEcond} with $\DtFE$,
and operators   $\G$  and $\Gdot$ that unconditionally satisfy \eqref{BEcond} and \eqref{iNegative} respectively,
  if the method given by  \eqref{IMEX-GLM} with $r>0$ satisfies the  component-wise conditions
\begin{eqnarray} \label{coefIMEXGLM}
\mR   \geq  0, \; \; \; \;  \mP  \geq  0, \; \; \; \;  \mW  \geq  0, \; \; \;  \mD \geq  0, \; \; \; \; \dot{\mD} \leq  0, 
 \; \; \;   \Gamma \geq  0,  \; \; \;  \mQ \geq  0,  \; \; \;   \mV \geq  0,
 \end{eqnarray}
then it preserves the strong stability property
\[  \|u^{n+1} \| \leq \max \left\{ \| u^{n+1-k} \| , \|u^{n+2-k}  \|, . . ., \|u^{n}  \| \right\} \]
under the time-step condition \[  \dt \leq r \DtFE = \sspcoef \DtFE. \]
\end{thm}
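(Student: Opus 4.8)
The plan is to mimic the proof strategy that underlies Theorems~\ref{thm:SSPMDRK} and \ref{thm:SSP}: show that under the sign conditions \eqref{coefIMEXGLM} each stage $y^{(i)}$ is a convex combination of quantities each of which is bounded in norm by $\max_\ell \|u^{n+\ell-k}\|$, and then propagate this bound through to $u^{n+1}$. First I would verify the ``consistency'' bookkeeping: since $\mR = I - \mP - \mW$ is defined so that row sums work out, and similarly $\Gamma$, $\mQ$, $\mV$ must satisfy the consistency relation (row sum of $\Gamma$ plus sums of $\mQ$, $\mV$ equals one) coming from the GLM preconsistency conditions, each stage equation and the update equation express $y^{(i)}$ (resp.\ $u^{n+1}$) as an affine combination with coefficients summing to one. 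Under \eqref{coefIMEXGLM} these coefficients are all nonnegative, hence genuine convex combinations.

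Next I would treat the three types of building blocks appearing on the right-hand side of \eqref{IMEX-GLMmatrix}. The terms $\mR U$, $\mP Y$, and $\Gamma U$, $\mQ Y$ simply reuse previously computed steps/stages, contributing $\|u^{n+\ell-k}\|$ or $\|y^{(j)}\|$ to the bound. The terms $\mW\bigl(Y + \tfrac{\dt}{r}\F(Y)\bigr)$ and $\mV\bigl(Y + \tfrac{\dt}{r}\F(Y)\bigr)$ are forward-Euler steps of the explicit operator $\F$; by the forward Euler condition \eqref{FEcond}, each such block $y^{(j)} + \tfrac{\dt}{r}\F(y^{(j)})$ has norm at most $\|y^{(j)}\|$ provided $\tfrac{\dt}{r}\le\DtFE$, i.e.\ $\dt \le r\DtFE$, which is exactly the claimed time-step restriction. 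Finally, the implicit terms $\dt d_{ii}\G(y^{(i)}) + \dt^2\dot d_{ii}\Gdot(y^{(i)})$ sit on the same side as $y^{(i)}$ itself, so I would isolate them: the relevant implicit building blocks are $y^{(i)} = (\text{stuff}) + \dt d_{ii}\G(y^{(i)})$ and $y^{(i)} = (\text{stuff}) + \dt^2\dot d_{ii}\Gdot(y^{(i)})$, which by the unconditional backward Euler condition \eqref{BEcond} (when $d_{ii}\ge 0$) and the unconditional implicit negative derivative condition \eqref{iNegative} (when $\dot d_{ii}\le 0$) satisfy $\|y^{(i)}\| \le \|(\text{stuff})\|$ for all $\dt>0$. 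Splitting the diagonal implicit contribution across these two conditions is done by writing $y^{(i)}$ as a convex combination that pulls each implicit block out separately, exactly as in the one-stage motivating computations preceding Theorem~\ref{thm:SSPMDRK}.

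Putting these pieces together, I would argue by induction on the stage index $i$: assuming $\|y^{(j)}\| \le M := \max\{\|u^{n+1-k}\|,\dots,\|u^n\|\}$ for all $j<i$, the stage equation for $y^{(i)}$ writes it as a convex combination of the prior steps $u^{n+\ell-k}$ (norm $\le M$), the prior stages $y^{(j)}$ (norm $\le M$ by hypothesis), the forward Euler blocks built on $y^{(j)}$, $j<i$ (norm $\le \|y^{(j)}\| \le M$ under $\dt\le r\DtFE$), and the implicit blocks involving $y^{(i)}$ itself (norm $\le$ norm of the explicit part, which is again a convex combination bounded by $M$). By convexity of the norm, $\|y^{(i)}\| \le M$. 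The same reasoning applied to the update line gives $\|u^{n+1}\| \le M$, which is the assertion. The one subtlety I would be careful about is the simultaneous presence of both $\G$ and $\Gdot$ implicitly at the same stage: one must check that the coefficients of the convex decomposition that peels off \emph{both} implicit blocks are nonnegative, which follows because it only requires $d_{ii}\ge 0$, $\dot d_{ii}\le 0$ and the nonnegativity of $r_{i\ell}$, $p_{ij}$, $w_{ij}$ --- precisely \eqref{coefIMEXGLM} --- and no constraint relating their magnitudes; the implicit conditions \eqref{BEcond} and \eqref{iNegative} being \emph{unconditional} is what makes this work for arbitrary $\dt$.

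\emph{Main obstacle.} I expect the genuinely delicate step to be the algebraic verification that the reuse of earlier \emph{stages} $y^{(j)}$ (as opposed to earlier steps $u^{n+\ell-k}$), together with the forward-Euler blocks and the split implicit blocks, does indeed exhaust the right-hand side as a single convex combination with the correct total weight one --- i.e.\ that the row-sum identity $\sum_\ell r_{i\ell} + \sum_{j<i} p_{ij} + \sum_{j<i} w_{ij} = 1$ (and the analogous identity for the $u^{n+1}$ line) holds and that subtracting off the implicit diagonal terms leaves this normalization intact. This is really the content of how $\mR$ and $\Gamma$ are defined, so once the definitions $\mR = I - \mP - \mW$ and the preconsistency of $(\Gamma,\mQ,\mV)$ are pinned down, the argument is a routine convexity induction; everything else is an application of the three base conditions \eqref{FEcond}, \eqref{BEcond}, \eqref{iNegative}.
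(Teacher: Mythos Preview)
Your proposal is correct and follows essentially the same route as the paper's proof: both argue by induction on the stage index, use the consistency (row-sum) relations together with the sign conditions \eqref{coefIMEXGLM} to exhibit the explicit part of each stage as a convex combination of prior steps, prior stages, and forward-Euler blocks, invoke \eqref{FEcond} for the $\F$ blocks under $\dt\le r\DtFE$, absorb the diagonal implicit $\G,\Gdot$ terms via \eqref{BEcond} and \eqref{iNegative}, and then treat the update line identically. The subtlety you flag---peeling off both implicit contributions at a single stage---is handled in the paper at exactly the same level of detail you propose.
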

\begin{proof}
We assume that we begin the simulation with $k$ starting values $u^0, . . ., u^{k-1}$ that are well behaved.
Observe that  by consistency the coefficients of the first stage
 $ \sum_{\ell=1}^k r_{1\ell}   =1$, and the conditions in the theorem  require $r_{1\ell} \geq 0 $ for $\ell = 1, ..., k$,
so that  
\[  \left\| \sum_{\ell=1}^k r_{1\ell} u^{n+\ell -k } \right\| \leq 
\sum_{\ell=1}^k r_{1\ell}   \left\|  u^{n+\ell -k } \right\| \leq  \max_{1 \leq \ell \leq k} \left\|  u^{n+\ell -k } \right\| .
\]
Now consider the first stage of the method 
\[
 y^{(1)}    =   \sum_{\ell=1}^k r_{1\ell} u^{n+\ell -k }  +   \dt d_{11}  \G(y^{(1)})  +  \dt^2  \dot{d}_{11}  \Gdot(y^{(1)}) .
\]
The fact that $\G$ and $\Gdot$ satisfy Condition \eqref{iNegative} allows us to conclude that to conclude that 
\[
 \left\| y^{(1)} \right\|    \leq    \left\|  \sum_{\ell=1}^k r_{1\ell} u^{n+\ell -k }  \right\| \leq  \max_{1 \leq \ell \leq k} \left\|  u^{n+\ell -k} \right\| 
 \; \; \; \; \; \forall  \; \dt .
\]

We now proceed to consider each stage in turn.
At the $i$th stage we have already shown that the previous stages 
satisfy the strong stability condition
\[  \left\| y^{(j)} \right\|  \leq  \max_{1 \leq \ell \leq k} \left\|  u^{n+ \ell -k } \right\|  , \]
for $j=1,..., i-1$.
The fact that $\F$ satisfies the forward Euler condition \eqref{FEcond} gives us 
\[  \left\| y^{(j)}   + \frac{\dt}{r} \F(y^{(j)}  ) \right\| \leq  \left\| y^{(j)} \right\|  \; \; \; \; \forall \; \;  \dt \leq r \DtFE .\]
Putting this together we observe that, due to the positivity of the coefficients the explicit terms at the $i$th term satisfy
\begin{eqnarray*}
 \left\|  y^{(i)}_{ex} \right\| &=&  \left\|  \sum_{\ell=1}^k r_{i\ell} u^{n + \ell -k} + \sum_{j=1}^{i-1} p_{ij} y^{(j)}    
+ \sum_{j=1}^{i-1} w_{ij} \left( y^{(j)}   + \frac{\dt}{r} \F(y^{(j)}   \right)  \right\|  \\
&\leq& \sum_{\ell=1}^k r_{i\ell}  \left\|   u^{n + \ell -k} \right\| + \sum_{j=1}^{i-1} p_{ij}  \left\|   y^{(j)}   \right\| 
+ \sum_{j=1}^{i-1} w_{ij}  \left\|   \left( y^{(j)}   + \frac{\dt}{r} \F(y^{(j)}   \right)  \right\| \\
&\leq& \left( \sum_{\ell=1}^k r_{i\ell}  + \sum_{j=1}^{i-1} p_{ij} 
+ \sum_{j=1}^{i-1} w_{ij}  \right)   \max_{1 \leq \ell \leq k} \left\|  u^{n+\ell -k } \right\|   \\
&\leq&  \max_{1 \leq \ell \leq k} \left\|  u^{n+\ell -k } \right\|   \\
\end{eqnarray*}
where the last equality is by the consistency conditions.
Observing that $y^{(i)} $ takes the form
\[ y^{(i)} = y^{(i)}_{ex} + \ \dt d_{ii} \G(y^{(i)}) +  \dt^2 \dot{d}_{ii} \Gdot(y^{(i)}) \]
and that $\G$ and $\Gdot$ satisfy Condition \eqref{iNegative}, we conclude that
\[ y^{(i)}   \leq  \max_{1 \leq \ell \leq k} \left\|  u^{n+ \ell -k } \right\|  .\]

Finally, the positivity of the coefficients means that the the last stage consists only of a convex combination so that 
\begin{eqnarray*}
\left\| u^{n+1} \right\| &\leq &    \sum_{\ell=1}^k \gamma_{\ell} \left\| u^{n+\ell - k} \right\|  
+ \sum_{j=1}^{s} q_{j} \left\| y^{(j)}    \right\| 
+ \sum_{j=1}^{s} v_{j} \left\| y^{(j)}   + \frac{\dt}{r} \F(y^{(j)}  ) \right\| . \\
\end{eqnarray*}
Once again, the  fact that $\F$ satisfies the forward Euler condition \eqref{FEcond} gives us 
\begin{eqnarray*}
\left\| u^{n+1} \right\| &\leq &  \left(  \sum_{\ell=1}^k \gamma_{\ell} +  \sum_{j=1}^{s} q_{j} 
+  \sum_{j=1}^{s} v_{j} \right) \max_{1 \leq \ell \leq k} \left\|  u^{n+ \ell -k } \right\|,
\end{eqnarray*}
for  $\dt \leq r \DtFE$. 
By the consistency conditions, these coefficients all add to one, so we obtain
\[ \left\| u^{n+1} \right\| \leq \max_{1 \leq \ell \leq k} \left\|  u^{n+ \ell -k } \right\|.\]
\end{proof}

\subsection{New SSP IMEX two-derivative GLM methods} 
\label{ND-IMEX-GLMmethods}

\subsubsection{Second order methods} 
In Section \ref{sec:2D-RK-IMEXmethods} we presented a one-step two-derivative  three stage second order method \eqref{IMEX2d3s2p} with $\sspcoef =1$. 
However that method did not include explicit evaluations  in the final stage because we wanted the AP property. If we allow the 
structure \eqref{IMEX-GLM} we can obtain a one-step two-derivative  three stage second order method with better SSP coefficient.

 
 \smallskip
 
\noindent{\bf One-step two-derivative  three stage second order method}
\begin{eqnarray} 
y^{(1)}  &= & u^{n} + \frac{1}{2+\sqrt{2}}   \dt  \G(y^{(1)}) - \frac{1}{2+\sqrt{2}}  \dt^2  \Gdot(y^{(1)})  \nonumber \\
y^{(2)}  &= &  \left( y^{(1)}   + \frac{\dt}{r} \F(y^{(1)}  ) \right)  \nonumber \\
y^{(3)}  &= &  \frac{6- \sqrt{2}}{8} y^{(1)}    + 
\frac{2+ \sqrt{2}}{8}  \left( y^{(2)}   + \frac{\dt}{r} \F(y^{(2)}  ) \right)   + \frac{1}{\sqrt{2}}  \dt  \G(y^{(3)})  \nonumber \\
u^{n+1} &= &   \left( 1 - \frac{\sqrt{2}}{4} \right)  y^{(3)}    
+ \frac{2+\sqrt{2}}{4(1+\sqrt{2})}   \left( y^{(3)}   + \frac{\dt}{r} \F(y^{(3)}  ) \right)  .
 \end{eqnarray}
preserves  the SSP properties of \eqref{BEcond} and \eqref{iNegative} with 
$\sspcoef = (1+\sqrt{2})/2 $.

\smallskip

\noindent{\bf Two-step two-derivative  three stage second order method} 
 \begin{eqnarray} \label{GLM2s2k2p}
y^{(1)}  &= & u^{n-1 }    \nonumber  \\
y^{(2)}  &= &  \left( y^{(1)}   + \frac{\dt}{r} \F(y^{(1)}  ) \right)   \nonumber  \\
y^{(3)}  &= & r_{31} u^{n }  +  w_{32} \left( y^{(2)}   + \frac{\dt}{r} \F(y^{(2)}  ) \right)   \nonumber  \\
&+&   \dt d_{33} \G(y^{(3)}) +  \dt^2 \dot{d}_{33} \Gdot(y^{(i)})  \nonumber \\
u^{n+1} &= & q_{3} y^{(3)}    +  v_{2} \left( y^{(2)}   + \frac{\dt}{r} \F(y^{(2)}  ) \right)   +  v_{3} \left( y^{(3)}   + \frac{\dt}{r} \F(y^{(3)}  ) \right)
 \end{eqnarray}
 with coefficients
 \[ w_{32} = \frac{ (27 + 5\sqrt{29})^{1/3} - 2^{1/3} }{2 (5 + \sqrt{29})^{1/3}} , 
\; \; \; \dot{d}_{33} = - \frac{2}{v_3 + q_3} , \]
\[d_{33} = \frac{10}{6}   +  \frac{2^{2/3}}{6}  \left( (9 \sqrt{29} + 43)^{1/3}  - (9 \sqrt{29} - 43)^{1/3}  \right)  \]
\[ r_{31} = 1 + \frac{1}{2} (\frac{1}{2}  (-5 + \sqrt{29}))^{1/3} - 1/(2^{2/3} (-5 + \sqrt{29})^{1/3}) \]
 \[ q_3 = \frac{20}{9}   -  \frac{2^{2/3}}{9} \left( (727-135 \sqrt{29})^{1/3} + (727+135 \sqrt{29})^{1/3} \right) \]
\[ v_2  = \frac{1}{9} \left( 7 -  2^{2/3}( 81\sqrt{29} + 137 )^{1/3} +  2^{2/3}( 81\sqrt{29} - 137 )^{1/3} \right),\]
\[ v_3  = 2^{2/3} \left(  ( 5 + \sqrt{29} )^{1/3} - ( \sqrt{29} -5)^{1/3} \right)  - 2,\]
 is SSP with \[ r = \sspcoef = \frac{1}{3} \left( 
 \left(\frac{1}{2}(61 + 9 \sqrt{29})\right)^{1/3} +
 \left(\frac{1}{2}(61 - 9 \sqrt{29})\right)^{1/3}  - 1\right)  \approx 1.5468.\]

\smallskip

\noindent{\bf $(k \geq 3)$-step two-derivative  two-stage second order methods:}
Increasing the number of steps allows fewer stages, which means fewer implicit
evaluations.  Here we present a new family of two-derivative $k$-step two stage second order methods 
for $k \geq 3$. These methods have
\[ r = \sspcoef = \frac{k-2}{k-1}\]
and take the form
   \begin{eqnarray} \label{GLM2sKk2p}
y^{(1)}  &= &  u^n  -  (k-1) \dt^2    \Gdot(y^{(1)})  \nonumber \\
y^{(2)}  &= & \frac{1}{k-1} u^{n-k+1}   +   \frac{k-2}{k-1}  \left(  y^{(1)} +  \frac{\dt}{r} \F(y^{(1)}  ) \right) \nonumber \\
& & \; \; \; \; \; \; \; +  k    \dt  \G(y^{(2)})  -  k \dt^2   \Gdot(y^{(2)})  \nonumber \\
u^{n+1} &= &    \frac{1}{k-1}  y^{(2)}  +   \frac{k-2}{k-1}  \left( y^{(1)}   + \frac{\dt}{r} \F(y^{(1)}  ) \right) 
 \end{eqnarray}

\subsubsection{Third order methods}

A two-step, two-derivative GLM with $s=5$ stages has $r=\sspcoef = 1.080445742835932$
 is given by the coefficients

\[ R = \left[ \begin{array}{lll}
0 & 0 & 1 \\
0.000000000013270   & 0.403826433558741   & 0.037615230472512 \\
0  &  0.221598110956903   & 0 \\
0  &  0.059380532720245  & 0  \\
0 & 0 & 0 \\
\end{array} \right].\]

\[ P = \left[ \begin{array}{lllll}
    0 & 0 & 0 & 0 & 0  \\
   0.452661697511965    & 0 & 0 & 0 & 0 \\
  0 &    0.032510664101898         &          0        &           0  & 0 \\
   0.235231740166619   & 0.000000000563127   &          0        &           0  & 0 \\
   0.536915718824635   & 0.013138165959401   &          0        &           0   \\
   \end{array} \right].\]

\[ W =  \left[ \begin{array}{lllll}
     0 & 0 & 0 & 0 & 0  \\
   0.105896638443513  & 0 & 0 & 0 & 0  \\
   0.745891224941199  & 0 & 0 & 0 & 0  \\
0 &    0 &    0.705387726550010       &            0 & 0 \\
   0.409669470298833   & 0 &   0.000000000119198 &  0.040276644797934 & 0 \\
\end{array} \right].\]

\[ \mD = diag \left[   0,
  21.332739593864588,
0,
   0.652867317315466,
  14.945015954497144 \right]. \]
\[   \mDdot = diag \left[ -6.7737812489230,   - 72.4600167654208, 0, 0,  -161.5846694139845\right]. \]

\[ \Gamma =  \left[ 0, 0,0 \right]. \]

\[ \mQ =    \left[ 0.289233938741249, 0, 0, 0,  0.041812814961867 \right]. \] 

\[ \mV = \left[  0.274172259985154,  0 , 0,  0.394780986311730, 0 \right]. \]

\section{Conclusions}

The increasing popularity of multi-stage multiderivative methods in the  time-evolution of  hyperbolic problems, 
raised interest in their strong stability properties. 
In this paper we review the recent work on SSP two-derivative methods.
We first discuss the  SSP formulation for multistage two-derivative methods presented in \cite{SDpaper}
 where we require the spatial discretization to satisfy the forward Euler condition \eqref{FEcond}
 and a second derivative condition of the form \eqref{SDcond}. 
 We present the conditions under which we can ensure that a  
 explicit SSP multistage two-derivative methods  preserve the strong stabilities properties of
 \eqref{FEcond} and \eqref{SDcond}. We present optimal methods of up to order $p=5$ and
 demonstrate the need for SSP time-stepping methods in simulations
where the spatial discretization is specially designed, as well as the 
sharpness of the SSP time-step  for some of these methods.

While this choice of base conditions  gives more flexibility in finding  SSP time stepping schemes, 
it limits the flexibility in the choice of the spatial discretization.  
For this reason, we considered in \cite{TSpaper} an alternative SSP formulation based on the  
conditions \eqref{FEcond} and \eqref{TScond} and   investigate SSP methods that preserve their  strong stability properties.  
 These base conditions are relevant in many commonly used spatial discretizations that
 are designed to satisfy the Taylor series condition \eqref{TScond} but
  may not satisfy the second derivative condition \eqref{SDcond}.
 Explicit SSP  methods which preserve the strong stability properties of  \eqref{FEcond} and \eqref{TScond}, 
have a maximum obtainable order of $p=6$, as we proved in \cite{TSpaper}.
While this approach decreases the flexibility in our choice of time discretization 
but it increases the  flexibility in our choice of spatial  discretizations.
Numerical tests presented in \cite{TSpaper} showed that this increased flexibility 
allowed for more efficient simulations in several cases.

We showed in \cite{Jingwei2022} that the $p \geq 2$  order conditions for implicit two-derivative Runge--Kutta method
 lead to negative coefficients, so that requiring that the second derivative satisfy the conditions
 \eqref{SDcond} or  \eqref{TScond} which have   positive coefficients  results in conditional SSP.
Instead, in \cite{Jingwei2022}  we presented a class of unconditionally SSP implicit multi-derivative  
Runge--Kutta schemes enabled by the backward derivative condition \eqref{iNegative} 
as an alternative to the second derivative conditions given in \cite{SDpaper,TSpaper}.
We  review this SSP theory here and reproduce  unconditionally SSP methods of order $p=2, 3, 4$  for use 
with spatial discretizations that satisfy these conditions.
This implicit negative derivative condition is extremely relevant for certain classes of problems including
 the Broadwell model and the BGK kinetic equation. In  \cite{Jingwei2022} we 
formulated  two-derivative  IMEX  Runge--Kutta  methods of order $p=2$ and $p=3$
that are SSP under a time-step restriction independent of the stiff term. These are reproduced in Section \ref{sec:2D-RK-IMEXmethods}.

Finally, we extend this SSP theory for two-derivative IMEX Runge--Kutta methods based on the derivative conditions
\eqref{BEcond} and \eqref{iNegative}  to two-derivative IMEX GLMs \ref{sec:2D-GLM-IMEX},
 and present novel methods of orders $p=2$ and $p=3$ in Section \ref{ND-IMEX-GLMmethods}.
 These methods have fewer stages and  larger SSP coefficient than the corresponding 
 SSP two-derivative IMEX Runge--Kutta methods, while still having no step-size restriction resulting from the 
 implicit component. We hope that the new approach and results for 
SSP two-derivative IMEX GLM methods will be of significant use  in the simulation of 
relevant problems.

\newpage 
\appendix

\section{Order Conditions} \label{OC-MDRK}
For a  method of the form \eqref{MSMDmatrix}  to be of order $p=P$, 
the coefficients $\mA, \mAdot, \vb, \bdot$  need to satisfy the order conditions given below.
For simplicity, we define auxiliary coefficients 
$\vc = \mA \ve$ and $\vcdot = \mAdot \ve$, where $\ve$ is a vector of ones.
\\

\noindent \begin{tabular}{ll}
 $p = 1$  & $ \vb^T e =1$ \\ 
$p = 2$  & $ \vb^T c+\bdot^Te =\frac{1}{2}$   \\
$p = 3$  &  $ \vb^T \vc^2 + 2 \bdot^T \vc=\frac{1}{3} $ \\ 
& $ \vb \mA \vc + \vb^T \vcdot + \bdot^T \vc=\frac{1}{6} $ \\
$p = 4$ & $\vb^T \vc^3 + 3 \bdot^T \vc^2 = \frac{1}{4} $   \\
& $ \vb^T \left( \vc \odot \mA \vc \right)+ \vb^T \left( \vc \odot \vcdot \right)+\bdot^T \vc^2 + \bdot^T \mA \vc + \bdot^T \vcdot = \frac{1}{8} $ \\
 & $ \vb^T \mA \vc^2+ 2 \vb^T \mAdot \vc+ \bdot^T \vc^2=\frac{1}{12} $\\
&  $ \vb^T \mA^2 \vc+ \vb^T \mA \vcdot + \vb^T \mAdot \vc + 
\bdot^T \mA \vc + \bdot^T \vcdot=\frac{1}{24} $\\
$p = 5$  &   $    \vb^T \vc^4 + 4 \bdot^T \vc^3 =\frac{1}{5} $ \\
	      &   $ \vb^T \left( \vc^2 \odot \mA \vc \right)  + \vb^T\left( \vc^2 \odot \vcdot \right)+\bdot^T \vc^3+
	      2 \bdot^T \left( \vc \odot \mA \vc \right)+ 2 \bdot^T\left( \vc \odot \vcdot \right)=\frac{1}{10} $  \\
	           &  $ \vb^T \left( \vc \odot \mA \vc^2 \right)+ 2 \vb^T \left(\vc \odot \mAdot \vc \right)
	           +\bdot^T \vc^3+ \bdot^T \mA \vc^2 + 2 \bdot^T \mAdot \vc=\frac{1}{15} $\\ 
	           &  $\vb^T \left( \vc \odot \mA^2 \vc \right)+ \vb^T \left( \vc \odot \mA \vcdot \right)+ \vb^T \left(\vc \odot \mAdot \vc \right)$ \\
	           &  \; \; \;  \; \; \; \;   $ + \bdot^T \left( \vc \odot \mA \vc \right)+\bdot^T \left(\vc \odot \vcdot \right) 
	         + \bdot^T \mA^2 \vc + \bdot^T \mA \vc + \bdot^T \mAdot \vc=\frac{1}{30}$ \\ 
	           &  $ \vb^T \left( \mA \vc \odot \mA \vc \right) + 2 \vb^T \left( \vcdot \odot \mA \vc \right)
	           + \vb^T \vcdot^2 + 2 \bdot^T \left(\vc \odot \mA \vc \right) + 2 \bdot^T \left(\vc \odot \vcdot \right)=\frac{1}{20} $  \\
	          &  $ \vb^T \mA \vc^3 + 3\vb^T \mAdot \vc^2+ \bdot^T \vc^3=\frac{1}{20}  $\\
	           & $ \vb^T \mA \left( \vc	 \odot  \mA \vc \right) + \vb^T \mA \left(\vc \odot \vcdot \right) + \vb^T \mAdot \vc^2 + \vb^T \mAdot \mA \vc
	           + \vb^T \mAdot \vcdot  $ \\
	           & \; \; \; \; \; \; \; \; \; \; \;  $ + \bdot^T \left( \vc \odot \mA \vc \right)+\bdot^T \left( \vc \odot \vcdot \right)=\frac{1}{40} $ \\
\vspace{0.075in}	& $\vb^T \mA^2 \vc^2 + 2 \vb^T \mA \mAdot \vc + \vb^T \mAdot \vc^2 + \bdot^T \mA \vc^2 + 
	2 \bdot^T \mAdot \vc =\frac{1}{60} $ \\
&	$ \vb^T \mA^3 \vc+ \vb^T \mA^2 \vcdot + \vb^T \mA \mAdot \vc + \vb^T \mAdot \mA \vc + \vb^T \mAdot \vcdot
	+ \bdot^T \mA^2 \vc $ \\
&  \; \; \; \; \; \; \; \; \; \; \; 	$+\bdot^T \mA \vcdot + \bdot^T \mAdot \vc = \frac{1}{120} $ \\
	\end{tabular}
	\\
	{
\noindent \begin{tabular}{ll}
$p = 6$     &    $  \vb^T \vc^5 +  5 \bdot^T \vc^4 =  \frac{1}{6}  $ \\
& $\vb^T \left(\vc^3 \odot \mA \vc \right) + 3 \bdot^T \left(\vc^2 \odot \mA \vc \right) + \bdot^T \vc^4 + \vb^T \left(\vc^3 \odot \vcdot \right) $ \\
&  \; \; \; \; \; \; \; \;   $+  3 \bdot^T \left(\vc^2 \odot \vcdot \right)  =  \frac{1}{12}$\\
&  $ \vb^T \left( \vc^2 \odot \mA \vc^2\right) + 2 \bdot^T\left(\vc \odot \mA \vc^2 \right) + 2 \vb^T \left(\vc^2 \odot \mAdot \vc \right) 
 +  \bdot^T \vc^4 $ \\
 & \; \; \; \; \; \; \; \;  $+ 4 \bdot^T \left( \vc \odot \mAdot \vc \right)  =  \frac{1}{18} $ \\
&  $ \vb^T \left( \vc \odot \mA \vc^3\right) + 3 \vb^T \left( \vc \odot \mAdot \vc^2 \right) + 
\bdot^T \mA \vc^3 +  3 \bdot^T \mAdot \vc^2 + \bdot^T \vc^4  =  \frac{1}{24} $ \\
&  $\vb^T \mA \vc^4 +  4 \vb^T \mAdot \vc^3 +  \bdot^T \vc^4  =  \frac{1}{30} $ \\
& $ \vb^T \left( \vc^2 \odot \mA^2 \vc \right) + 2 \bdot^T \left(\vc \odot \mA^2 \vc \right) +
 \vb^T \left(\vc^2 \odot \mA \vcdot \right) + \vb^T \left(\vc^2 \odot \mAdot \vc \right) + \bdot^T $ \\
  &	\; \; \; \; \; \; \;  $+ 2 \bdot^T \left(\vc \odot \mAdot \vc \right) 
  + \bdot^T \left( \vc^2 \odot \vcdot \right)  =  \frac{1}{36}$ \\	
& $ \vb^T \left(\vc \odot \mAdot^2 \vc^2 \right) + \bdot^T \mAdot^2 \vc^2 + \bdot^T \left( \vc \odot \mA \vc^2 \right) 
+ \vb^T \left( \vc \odot \mAdot \vc^2 \right)   $ \\
& \; \; \; \;   $  + 2 \vb^T \left( \vc \odot \mA \mAdot \vc \right) + \bdot^T \mAdot \vc^2 + 2 \bdot^T \mA \mAdot \vc 
+ 2 \bdot^T \left(\vc \odot \mAdot \vc \right) =\frac{1}{72} $\\
& $ \vb^T \mA^2 \vc^3 +  \bdot^T \mA \vc^3 +  \vb^T \mAdot \vc^3 +  3 \vb^T \mA \mAdot \vc^2 
+  3 \bdot^T \mAdot \vc^2  =  \frac{1}{120} $ \\
& $ \vb^T \left( \vc \odot \mA \vc \odot \mA \vc \right) 
+ \bdot^T \left(\mA \vc \odot \mA \vc \right)  
+ \vb^T \left(\vc \odot \mAdot \mA \vc \right)  $ \\	
& \; \; \; \; \; $ + \vb^T \left( \vc \odot \mA \left(\vc \odot \vcdot \right)\right)  
 + \bdot^T \left(\vc^2 \odot \mA \vc \right)  + \vb^T \left( \vc \odot \mAdot \vc^2\right) 	
+\bdot^T \mAdot \mA \vc   $ \\
& \; \; \; \; \; $ + \bdot^T \mA \left(\vc \odot \vcdot \right) 
+ \vb^T \left( \vc \odot  \mAdot  \vcdot \right) +  \bdot ^T \mAdot  c^2 
+  \bdot ^T \vc^2 \vcdot  +   \bdot ^T \mAdot  \vcdot   =  \frac{1}{48}   $ \\	
\end{tabular} 

\noindent \begin{tabular}{ll}
& $\vb^T \mA \left(\vc^2 \odot \mA \vc\right) + \vb^T \mA \left(\vc^2 \odot \vcdot \right)  
+ \bdot^T  \left(\vc^2 \odot \mA \vc \right) + 2 \vb^T \left( \mAdot \vc \odot \mA \vc\right) $ \\
&	\; \; \; \; \; $ + \vb^T \mAdot  \vc^3 + 2 \vb^T\left( \mAdot \vc \odot \vcdot \right) 
+ \bdot^T  \left( \vc^2 \odot \vcdot \right)  =  \frac{1}{60}$ \\
& $ \vb^T\left( \mA \vc \odot \mA \vc^2\right) +  \bdot^T \left( \vc \odot \mA \vc^2 \right) 
+ \vb^T \mAdot \mA \vc^2 + \vb^T \mAdot  \vc^3 + 2 \vb^T \left( \mA \vc \odot  \mAdot c\right) $ \\
& \; \; \; \; \; $ + 2 \vb^T \mAdot^2 \vc + 2 \bdot^T \left(\vc \odot  \mAdot \vc \right)  =  \frac{1}{90}   $   \\	          
& $ \vb^T \left(\vc \odot \mA^3 \vc \right) + \bdot^T \mA^3 \vc + \bdot^T \left( \vc \odot \mA^2 \vc \right)
+ \vb^T \left(  \vc \odot  \mAdot  \mA \vc\right)  $ \\
& \; \; \; \; \;  $ + \vb^T \left( \vc \odot \mA \mAdot \vc \right) 
 + \vb^T \left(\vc \odot \mA^2 \vcdot \right) + \bdot^T  \mAdot  \mA \vc  
+ \bdot^T A \mAdot \vc  + \bdot^T A^2\vcdot   $\\ 
&\; \; \; \; \;  $+ b^T\left(c \odot  \mAdot \vcdot \right) 
+ \bdot^T \mC \mA \vcdot  + \bdot^T \mC \mAdot \vc + \bdot^T  \mAdot \vcdot   =  \frac{1}{144} $\\	         	          	          	          
& $ \vb^T \left(  \mA \vc \odot \mA^2 \vc \right) + \vb^T \left(  \mA \vc \odot \mA \vcdot \right) 
+ \vb^T \left( \mA \vc \odot  \mAdot \vc \right) + \vb^T \left( \mAdot \vc \odot  \mA \vc\right) $ \\
& \; \; \; \; \;  $ + \vb^T \mAdot \mA^2 \vc + \bdot^T \left(\vc \odot \mA^2 \vc\right) 
 + \vb^T\left(  \mAdot \vc \odot \vcdot \right)  + \vb^T \mAdot \mA \vcdot   $ \\
& \; \; \; \; \;  $ + \bdot^T \left(\vc \odot A\vcdot \right) + \vb^T \mAdot ^2 \vc + \bdot^T \left(\vc \odot  \mAdot c\right)  =  \frac{1}{180} $\\
 & $ \vb^T( \mA^2 \left(\vc \odot  \mA \vc \right) + \vb^T \mA^2 \left(\vc \odot \vcdot \right) + \vb^T \mA \mAdot \vc^2 + \vb^T \mA \mAdot  \mA \vc $ \\
&   \; \; \; \; \;  $  + \vb^T \mAdot \left(\vc \odot  \mA \vc\right)  + \bdot^T \mA\left(\vc \odot  \mA \vc\right) + \vb^T \mA \mAdot \vcdot   + \vb^T \mAdot \left(\vc \odot \vcdot \right)  $ \\
&   \; \; \; \; \;  $  + \bdot^T \mA \left(\vc \odot \vcdot \right)  + \bdot^T  \mAdot \vc^2  + \bdot^T  \mAdot  \mA \vc + \bdot^T  \mAdot \vcdot   
=  \frac{1}{240} $\\
& $ \vb^T \mA^3 \vc^2 +  \bdot^T \mA^2 \vc^2 +  \vb^T \mAdot \mA \vc^2 +  \vb^T \mA \mAdot  \vc^2 
+  2 \vb^T \mA^2 \mAdot \vc +  \bdot^T  \mAdot  \vc^2 $        \\
& \; \; \; \; \;  $  +  2 \bdot^T \mA \mAdot \vc +  2 \vb^T \mAdot^2 \vc  =  \frac{1}{360}   $        \\
& $\vb^T \left(\vc \odot   \mA \vc \odot   \mA \vc \right) + \bdot^T \left( \mA \vc \odot   \mA \vc\right) 
+ 2 \vb^T \left(\vc \odot  \vcdot  \odot   \mA \vc\right) $ \\
& \; \; \; \; \;  $  + 2 \bdot^T \left(\vc^2 \odot   \mA \vc \right) + 2 \bdot^T \left(\vcdot  \odot   \mA \vc\right) + 2\bdot^T \left(c^2 \odot  \vcdot 	\right) 
 $ \\
 & \; \; \; \; \;  $ + \vb^T\left(\vc \odot  \vcdot ^2\right)   +  \bdot^T  \vcdot ^2  =  \frac{1}{24}     $ \\              
\end{tabular} 
}

\section{Order Conditions for IMEX two-derivative Runge--Kutta method} \label{sec:OC-IMEXMD}
The order conditions for a method \eqref{IMEX-RK}  are generally easier to formulate if the method is written in its Butcher form:
\begin{equation}\label{IMEX-RKmatrix1}
U = \ve u^n +   \Delta t \widehat{\mA} \F(U)+ \Delta t \mA \G(U) +  \Delta t^2  \dot{\mA} \Gdot(U). 
\end{equation}
The conversion between the two formulations (\ref{IMEX-RKmatrix})   and (\ref{IMEX-RKmatrix1}) is given by:
\begin{eqnarray}
\widehat{\mA}   =  \frac{1}{r} (I-\mP- \mW)^{-1} \mW, \;  \mA  =  \ (I-\mP- \mW)^{-1} \mD , \;  \dot{\mA}   =   (I-\mP- \mW)^{-1} \dot{\mD}.
\end{eqnarray}

The vectors $\widehat{\vb}$,  $\vb$, and $\dot{\vb}$ are given by the last row of $\widehat{\mA}$,  $\mA$, and $\dot{\mA}$, respectively.
The vectors $\vc=\mA \ve$, $\Cdot=\dot{\mA} \ve$, and $\Chat= \widehat{\mA} \ve$ define the time-levels at which the stages are happening;
these values are known as the abscissas.   The order conditions for methods of this form are:

\smallskip

{\renewcommand{\arraystretch}{1.1}
\begin{tabular}{llll} 
{For $p \geq 1$} & $\vb^t \ve=1$ &   \; \; \; & $\bhat^t \ve=1$ \\  
{For p $\geq$ 2}  & $\vb^t \vc + \bdot^t \ve=\frac{1}{2}$  &  &$\vb^t\Chat =\frac{1}{2} $ \\
			   & $\bhat^t \vc =\frac{1}{2} $   &. & $\bhat^t \Chat =\frac{1}{2}  $\\ 	   
{For $p \geq 3$}  & $ \vb^t\mA \vc + \bdot^t \vc + \vb^t \Cdot = \frac{1}{6}  $ & \;  & $ \vb^t\mA\Chat + \bdot^t\Chat = \frac{1}{6}  $ \\
 & $ \vb^t\widehat{\mA} \vc =\frac{1}{6} $  &  & $ \vb^t\widehat{\mA}\Chat = \frac{1}{6} $ \\ 
  & $ \bhat^t\mA \vc + \bhat^t \Cdot = \frac{1}{6} $  &  &  $ \bhat^t \mA \Chat  = \frac{1}{6} $ \\
 & $ \bhat^t\widehat{\mA} \vc =\frac{1}{6} $  &  &  $ \bhat^t\widehat{\mA}\Chat = \frac{1}{6} $ \\ 
 & $\vb^t(\vc \odot  \vc) + 2\bdot^t \vc = \frac{1}{3} $ &  &  $\vb^t( \vc \odot  \Chat) + \bdot^t \Chat = \frac{1}{3} $ \\
 & $\vb^t( \Chat  \odot  \Chat)  = \frac{1}{3} $ &  &  $\bhat^t(\vc \odot  \vc) = \frac{1}{3} $ \\
 & $\bhat^t( \vc \odot \Chat)  = \frac{1}{3} $ &  &  $\bhat^t( \Chat  \odot  \Chat)  = \frac{1}{3}$ \\ 
\end{tabular}

\smallskip

\section{Order Conditions for IMEX two-derivative GLM method} \label{sec:OC-IMEXGLM}

We wrote the  general linear method with $k$ steps and $s$ stages in a  matrix-vector notation \eqref{IMEX-GLMmatrix}
\begin{eqnarray*} 
 Y &= & \mR  U + \mP Y  +  \mW \left( Y + \frac{\Delta t}{r} \F(Y) \right)  +  \dt \mD \G(Y) +  \dt^2 \mDdot  \Gdot(Y), \\
 u^{n+1} &= &   \Gamma U + \mQ Y + \mV \left( Y + \frac{\Delta t}{r} \F(Y) \right). 
 \end{eqnarray*}
 It is more convenient to derive and present the order conditions in the form
\begin{eqnarray*}
Y & = &  \mT U^n + \Delta t  \mAh \F(Y) + \Delta t \mA \G(Y)  + \Delta t^2  \mAdot  \Gdot(Y) \\
u^{n+1} & = &  \theta U^n + \Delta t  \bhat \F(Y) + \Delta t  \vb \G(Y)  + \Delta t^2  \bdot \Gdot(Y) .
\end{eqnarray*}
where the conversion between the two sets of coefficients is given by
\[  \mT = (I - \mP - \mW)^{-1} \mR, \; \; \; \mAh = \frac{1}{r} (I - \mP - \mW)^{-1} \mW, \] 
\[ \mA =   (I - \mP - \mW)^{-1}  \mD, \; \; \; \mAdot =   (I - \mP - \mW)^{-1}  \mDdot, \]
\[ \theta =  \Gamma + (\mQ  + \mV) (I - \mP - \mW)^{-1} \mR, \; \; \; 
\bhat =   \frac{1}{r}  \left( (\mQ  + \mV) (I - \mP - \mW)^{-1} \mW + \mV \right) , \]
\[ \vb = (\mQ  + \mV) (I - \mP - \mW)^{-1} \mD, \; \; \;
\bdot = (\mQ  + \mV) (I - \mP - \mW)^{-1} \mDdot. \]

For a method to be order $P$ it must satisfy all the  order conditions  $p \leq P$. The 
conditions up to $P=3$  are given below, where we use the vector $\ell = \left[ 1-k, 2-k, . . ., 0\right]$.

{\renewcommand{\arraystretch}{1.15}
\begin{tabular}{llll} 
$p=1$ &  $ \theta \ell +  \bhat  e = 1 $ & \; \; \; &$  \theta \ell +  \vb  e = 1 $ \\
$p=2$ & $ \frac{1}{2}  \theta \ell^2 +  \bhat  (\mT \ell +  \mAh e)  = \frac{1}{2} $ &  &
$\frac{1}{2}  \theta \ell^2 +  \bhat  (\mT \ell +  \mA  e)  = \frac{1}{2} $  \\
& $ \frac{1}{2}  \theta \ell^2 + \vb  (\mT \ell + \mAh e)  = \frac{1}{2} $ &  & 
$\frac{1}{2}  \theta \ell^2   +  \vb  (\mT \ell +  \mA  e) +  {\dot\vb } e = \frac{1}{2}  $ \\
\end{tabular} \\
\begin{tabular}{ll} 
$p=3$ & $ \frac{1}{6}  \theta \ell^3 +
 \bhat  \left(  \frac{1}{2} \mT \ell^2 
+  \mAh (\mT \ell +  \mAh e) 
\right) = \frac{1}{6} $\\
& $\frac{1}{6}  \theta \ell^3 +
 \bhat  \left(  \frac{1}{2} \mT \ell^2 
+  \mAh (\mT \ell +  \mA  e) 
\right)
= \frac{1}{6} $ \\
& $ \frac{1}{6}  \theta \ell^3 +
 \bhat  \left(  \frac{1}{2} \mT \ell^2 
+  \mA  (\mT \ell +  \mAh e) \right)   = \frac{1}{6} $ \\
& $\frac{1}{6}  \theta \ell^3 +
 \bhat  \left(  \frac{1}{2} \mT \ell^2 
+  \mA  (\mT \ell +  \mA  e)
+  {\dot{\mA}} e \right)   = \frac{1}{6} $ \\
& $  \frac{1}{6}  \theta \ell^3 +
 \vb  \left(  \frac{1}{2} \mT \ell^2 
+  \mAh (\mT \ell +  \mAh e) 
\right) = \frac{1}{6} $ \\
& $  \frac{1}{6}  \theta \ell^3 +
 \vb  \left(  \frac{1}{2} \mT \ell^2 
+  \mAh (\mT \ell +  \mA  e) 
\right)
= \frac{1}{6} $ \\
& $  \frac{1}{6}  \theta \ell^3 +
 \vb  \left(  \frac{1}{2} \mT \ell^2 
+  \mA  (\mT \ell +  \mAh e) 
\right)  +  {\dot\vb } (\mT \ell +  \mAh e)
= \frac{1}{6} $ \\
&   $\frac{1}{6}  \theta \ell^3 +
 \vb  \left(  \frac{1}{2} \mT \ell^2 
+  \mA  (\mT \ell +  \mA  e) 
+  {\dot\mA } e \right) 
+  {\dot\vb } (\mT \ell +  \mA  e)
= \frac{1}{6} $ \\
& $  \frac{1}{3}  \theta \ell^3 +
 \bhat  \left( 
 (\mT \ell +  \mAh e) \odot
 (\mT \ell +  \mAh e) \right)
= \frac{1}{3} $ \\
& $  \frac{1}{3}  \theta \ell^3 +
 \bhat  \left( 
 (\mT \ell +  \mAh e) \odot
 (\mT \ell +  \mA  e) \right)
= \frac{1}{3} $ \\
& $ \frac{1}{3}  \theta \ell^3 +
 \bhat  \left( 
 (\mT \ell +  \mA  e) \odot
 (\mT \ell +  \mA  e) \right)
= \frac{1}{3} $ \\
& $   \frac{1}{3}  \theta \ell^3 +
 \vb  \left( 
 (\mT \ell +  \mAh e) \odot
 (\mT \ell +  \mAh e) \right)
= \frac{1}{3} $ \\
&  $\frac{1}{3}  \theta \ell^3 +
 \vb  \left( 
 (\mT \ell +  \mA  e) \odot
 (\mT \ell +  \mAh e) \right)
 +  {\dot{\vb}} 
 \left( \mT \ell +  \mAh e \right)
= \frac{1}{3} $ \\
& $ \frac{1}{3}  \theta \ell^3 +
 \vb  \left( 
 (\mT \ell +  \mA  e) \odot
 (\mT \ell +  \mA  e) \right)
 + 2  {\dot{\vb}} 
 \left( \mT \ell +  \mA  e \right)
= \frac{1}{3} $ \\
\end{tabular}

\bibliography{hu_bibtex,draft2}

\end{document}